\setlist[1]{wide}
\setlist[2]{leftmargin=15mm}
\setlist[enumerate]{label=\rm{(\arabic*)}}
\setlist[enumerate,2]{label=\rm({\it\roman*}), }
\setlist[itemize]{label=\raisebox{0.25ex}{\tiny$\bullet$}}
\newtheorem{maintheorem}{Theorem}
\newtheorem{mainprop}[maintheorem]{Proposition}
\newtheorem{theorem}{Theorem}[section]
\newtheorem{proposition}[theorem]{Proposition}
\newtheorem{lemma}[theorem]{Lemma}
\theoremstyle{definition}
\newtheorem{remark}[theorem]{Remark}
\newtheorem{example}[theorem]{Example}
\newcommand{\suchthat}{\;\ifnum\currentgrouptype=16 \middle\fi|\;}
\renewcommand{\k}{\mathrm{k}}
\newcommand{\Aut}{\mathrm{Aut}}
\newcommand{\Tame}{\mathrm{Tame}}
\newcommand{\Tra}{\mathrm{Tra}}
\newcommand{\Tri}{\mathrm{Tri}}
\newcommand{\End}{\mathrm{End}}
\newcommand{\GL}{\mathrm{GL}}
\newcommand{\SL}{\mathrm{SL}}
\newcommand{\SAut}{\mathrm{SAut}}
\newcommand{\F}{\mathbb{F}}
\newcommand{\SAff}{\mathrm{SAff}}
\newcommand{\Aff}{\mathrm{Aff}}
\newcommand{\A}{\mathbb{A}}
\newcommand{\Jac}{\mathrm{Jac}}
\title[$\SAut(\A^2)$ is topologically simple]{Topological simplicity of the group of automorphisms of the affine plane}
\author{J\'er\'emy Blanc}
\address{J\'er\'emy Blanc\\ Institut de math\'ematiques \\ 
Universit\'e de Neuch\^atel\\
Rue Emile-Argand 11\\
CH-2000 Neuch\^atel \\
Switzerland}
\email{jeremy.blanc@unine.ch}\thanks{The author gratefully acknowledges support by the Swiss National Science Foundation Grant  ``Birational transformations of higher dimensional varieties'' 200020\_214999}
\date{\today}
\begin{document}
\begin{abstract}
We prove that the group $\mathrm{SAut}_{\mathrm{k}}(\mathbb{A}^2)$ is simple as an algebraic group of infinite dimension, over any infinite field $\mathrm{k}$, by proving that any closed normal subgroup is either trivial or the whole group. In higher dimension, we show that closed normal subgroups contain all tame automorphisms.

The case of finite fields, very different, is also discussed.
\end{abstract}
\subjclass[2010]{14R10,14R20}
\maketitle
\tableofcontents
\section{Introduction}
Let $\k$ be a field and $n\ge 1$ be an integer. The group $\Aut_\k(\A^n)$ of polynomial automorphisms of $\A^n$ (see Section~\ref{NotRem} for precise definitions and reminders) is, despite much work, highly not understood for $n\ge 3$. For $n=3$ and $\mathrm{char}(\k)=0$, the group is known to be not generated by affine and triangular automorphisms: there are \emph{non-tame automorphisms} \cite{ShestakovUmirbaev}, like for instance the famous Nagata automorphism of \cite[Section 2.1]{Nagata} (see Example~\ref{NagataAuto}). We however do not have a set of explicit generators for any $n\ge 3$ and any field $\k$. The group $\Aut_\k(\A^2)$ is much better understood, for any field $\k$. It has a well-known structure of an amalgamated product, which is very useful for a lot of questions on the group: one knows its finite subgroups, algebraic subgroups, the possible dynamical degrees, ...

The group  $\Aut_\k(\A^n)$ is contained in $\End_\k(\A^n)\simeq \k[x_1,\ldots,x_n]^n$, on which one can put two natural topologies (see \cite{Stampfli}), one being weaker than the other one. The group $\Aut_\k(\A^n)$ then has the structure of an \emph{infinite dimensional algebraic group}, or \emph{ind-group}, as explained in \cite{Sha81} (see also~\cite{Kambayashi,Kumar,FurterKraft}). One can then ask for the structure of the group, and in particular ask for possible closed normal subgroups. Note that this only makes sense  when $\k$ is infinite, as otherwise $\Aut_\k(\A^n)$ is a discrete groupe. The Jacobian gives a natural surjective group homomorphism
 \[\Jac\colon \Aut_\k(\A^n)\twoheadrightarrow \mathbb{G}_M(\k)\simeq \k^*\]
 whose kernel is the closed (in any of the two topologies) normal subgroup $\SAut_\k(\A^n)$. One can then show that any non-trivial closed normal subgroups of $\Aut_\k(\A^n)$ contains all tame automorphisms \cite[Proposition 15.11.5]{FurterKraft} (see  Proposition~\ref{NAut} below for a generalisation).

The main question in this context, still open in general, is whether $\SAut_\k(\A^n)$ is simple as an ind-group. In \cite[Theorem 5]{Sha81}, it is claimed that $\SAut_\k(\A^n)$ is simple as an ind-group, but the proof uses the fact that the associated Lie algebra is simple, and thus also uses the Lie correspondence established in \cite[Theorem 1]{Sha81}, which is know to be false (see \cite{FurterKraft}). The arguments used in \cite[Proposition 15.11.5]{FurterKraft} dot not work here, as the conjugation by an homothety, not contained in $\SAut_\k(\A^n)$, is needed. The simplicity of $\SAut_\k(\A^n)$ was then open until now, for each $n\ge 2$. We prove it for $n=2$ and give a first step for $n\ge 3$:

\begin{maintheorem}\label{TheoremSimple}
Let $\k$ be an infinite field and let $n\ge 1$ be an integer. If $N\subseteq \SAut_\k(\A^n)$ is a closed normal subgroup, then either $N=\{\mathrm{id}\}$ or $N$ contains any tame automorphism of $\SAut_\k(\A^n)$. In particular, if $n=2$, then $\SAut_\k(\A^2)$ is simple as an ind-group.
\end{maintheorem}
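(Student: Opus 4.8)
The plan is to prove the general dichotomy and then deduce the case $n=2$ at no cost: by the Jung--van der Kulk theorem every automorphism of $\A^2$ is tame, so $\SAut_\k(\A^2)$ equals its subgroup of tame automorphisms and ``$N$ contains every tame automorphism'' reads ``$N=\SAut_\k(\A^2)$''. So let $N\subseteq\SAut_\k(\A^n)$ be closed and normal with $\mathrm{id}\ne g\in N$; the target is to show that $N$ contains every elementary automorphism $e_q\colon(x_1,\dots,x_n)\mapsto(x_1+q(x_2,\dots,x_n),x_2,\dots,x_n)$, every translation and every element of $\SL_n(\k)$, since these generate the tame subgroup of $\SAut_\k(\A^n)$.

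The first reduction is to produce a \emph{single} non-trivial elementary automorphism $e_p\in N$; one then bootstraps inside $\SAut_\k(\A^n)$. Conjugating $e_p$ by the diagonal torus of $\SL_n(\k)$ multiplies the coefficient of each monomial of $p$ by a character $\prod_{i\ge 2}u_i^{-(1+a_i)}$; these characters are pairwise distinct, $\k$ is infinite, and $\{q:e_q\in N\}$ is a closed additive subgroup (also stable under translating the variables of the argument), so from $p$ one extracts a single monomial $m$ and then an entire copy $\{e_{cm}:c\in\k\}$ of $\mathbb{G}_a$. Conjugating $e_m$ by a translation $\tau$ in one of its variables and dividing by $e_m$ produces, for suitable $\tau$, a non-trivial elementary automorphism of strictly smaller degree in that variable, so after finitely many steps $N$ contains a non-trivial translation; conjugating translations back by elementary automorphisms $e_{x_2^{N}}\in\SAut_\k(\A^n)$ re-creates elementary automorphisms of every degree, the transvections $e_{cx_2}$ and their conjugates generate $\SL_n(\k)$, and for $n\ge 3$ the ascent may instead be run through Derksen's one-generator presentation of $\Tame_\k(\A^n)$, still without leaving $\SAut_\k(\A^n)$. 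This gives $N\supseteq\Tame_\k(\A^n)\cap\SAut_\k(\A^n)$. (In positive characteristic the monomial extraction and the degree ascent need extra care, using once more that $\k$ is infinite; this reduction is the $\SAut$-analogue of \cite[Prop.~15.11.5]{FurterKraft}, generalised in Proposition~\ref{NAut}.)

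It remains to produce, from an arbitrary $\mathrm{id}\ne g\in N$, a non-trivial elementary automorphism in $N$, and this is the main obstacle. The classical argument conjugates $g$ by the homotheties $h_t\colon x\mapsto tx$ and passes to the limit $t\to 0$, which linearises $g$ along a fixed point and then lets one descend to an affine and then to an elementary automorphism; but $\Jac(h_t)=t^n\ne 1$, so $h_t\notin\SAut_\k(\A^n)$ and $N$ is only normal in $\SAut_\k(\A^n)$, and the whole degeneration has to be re-engineered. The plan is to renormalise using only one-parameter subgroups that do lie in $\SAut_\k(\A^n)$ --- the coordinate tori $\mathrm{diag}(t,t^{-1},1,\dots,1)\in\SL_n(\k)$, the unipotent one-parameter groups $c\mapsto e_{cq}$, and their $\SAut$-conjugates --- applied one coordinate direction at a time, interleaved with commutators $[g,h]=g\,(hg^{-1}h^{-1})\in N$, and passing to the resulting limits inside $N$, which is legitimate precisely because $N$ is closed. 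For $n=2$ the natural bookkeeping device is the Bass--Serre tree of the amalgamated product $\Aut_\k(\A^2)=\Aff\ast_{\Aff\cap\J}\J$: one cyclically reduces $g$ and, conjugating and commuting by affine and de Jonquières automorphisms of Jacobian $1$, pushes $g$ into $\Aff$ or into $\J$ and thence into the elementary subgroup.

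The crux is termination. Each coordinate torus contracts $g$ in one variable while expanding it in another, so the order in which the directions are treated matters, and one has to verify both that the successive limits remain in $N$ and that the process actually halts at a non-trivial elementary automorphism rather than collapsing to the identity. I expect this to be the heart of the argument when $n=2$; for $n\ge 3$ the same scheme only needs to reach one non-trivial elementary automorphism, after which ``contains all tame'' follows as above, which is why in higher dimension the theorem records only this first step.
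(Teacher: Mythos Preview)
Your bootstrap (from one elementary automorphism, or one translation, to all tame automorphisms of Jacobian~$1$) is essentially correct and matches the paper's Propositions~\ref{TranslationSLn} and~\ref{SLnTame} (due to Lewis). But the step you yourself flag as ``the main obstacle'' --- producing, from an arbitrary $\mathrm{id}\ne g\in N$, a non-trivial elementary automorphism or translation in $N$ --- is not carried out. You propose conjugation by coordinate tori $\mathrm{diag}(t,t^{-1},1,\dots,1)$, commutators, and (for $n=2$) a reduction along the Bass--Serre tree, but you give no argument for termination, and indeed you say you ``expect this to be the heart of the argument''. As it stands, this is a plan with a hole at the crucial point.

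The paper closes this hole by a different and simpler mechanism that sidesteps the termination issue entirely and works uniformly for all $n\ge 2$ (no Bass--Serre tree, no tori). It aims directly for a \emph{translation}, not an elementary automorphism. Starting from $f\in N\setminus\{\mathrm{id}\}$ that is not already a translation, one forms the commutator $g(t)=\tau_t^{-1}f^{-1}\tau_t f\in\SAut_{\k[t]}(\A^n)$ with a $t$-parametrised translation $\tau_t$, so $g(0)=\mathrm{id}$ and $g\ne\mathrm{id}$. Writing $g_i=x_i+\sum_{j\ge 1,m\ge 0}q_{i,j,m}\,t^j$ with $q_{i,j,m}$ homogeneous of degree $m$, one chooses coprime $a,b\ge 1$ with $a/b=\max\{m/j:q_{i,j,m}\ne 0\}$ and conjugates $g(t^a)$ by the translation $\rho_\epsilon=(x_1+t^{-b}\epsilon_1,\dots,x_n+t^{-b}\epsilon_n)\in\SAut_{\k[t,t^{-1}]}(\A^n)$. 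The choice of $a/b$ forces $h_\epsilon=\rho_\epsilon^{-1}g(t^a)\rho_\epsilon$ to lie in $\SAut_{\k[t]}(\A^n)$, and substituting $t=0$ yields the translation $h_\epsilon(0)=(x_1+c_1,\dots,x_n+c_n)$ with $c_i=\sum_{m/j=a/b}q_{i,j,m}(\epsilon)$. The set of $\epsilon$ for which $h_\epsilon(0)\ne\mathrm{id}$ is a non-empty open subset of $\A^n$, so over an infinite field one can pick $\epsilon\in\k^n$ making $h_\epsilon(0)$ a non-trivial translation. For every $t_0\in\k^*$ the element $h_\epsilon(t_0)$ is visibly of the form $\rho(\tau^{-1}f^{-1}\tau f)\rho^{-1}$ with $\rho,\tau\in\Tra_n(\k)$, hence lies in $N$; closedness of $N$ then forces $h_\epsilon(0)\in N$.

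The point is that conjugation by translations does not rescale any variable, so there is no ``expand one direction while contracting another'' obstruction to overcome; the single slope $a/b$ controls everything at once. Your torus approach would have to balance competing weights across coordinates, which is exactly the termination problem you identify and which the paper's method never encounters.
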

In Theorem~\ref{TheoremSimple}, the closedness is asked in any of the two topologies defined before. If $\k$ is a local field, one can also put a topology on $\SAut_\k(\A^n)$ using the topology of the field, and the result still will work. The construction actually proves that the results hold for any topology such that for each $f\in \SAut_{\k[t]}(\A^n)$ and each closed subgroup $N\subseteq \SAut_\k(\A^n)$, if replacing $t$ with any element of $\k^*$ gives an element of $N$, then replacing by $t$ by zero also gives an element of $N$.

Without the topology, the group $\SAut_\k(\A^2)$ is known to be not simple since a long time (this is the main result of \cite{Danilov}, see also \cite{FurterLamy} for a modern and more detailed treatment). The (non-)simplicity of the group $\SAut_\k(\A^n)$ as an abstract group  is however open for each $n\ge 3$ and each infinite field $\k$ (for finite fields, it suffices to look at the actions on the set of $\k$-rational points and the kernel will give normal subgroups).

In  \cite{Blanc}, it was actually proven that the conjugacy class of a general element of $\SAut_\k(\A^2)$ was closed, contrary to the case of $\Aut_\k(\A^2)$: for most elements $f\in \SAut_\k(\A^2)$, the set $\{af a^{-1}\mid a\in \SAut_\k(\A^2)\}$ is closed. This was considered as a first step towards a proof of the non-simplicity of $\SAut_\k(\A^2)$ as an algebraic group. The next step actually does not work. Writing \[\Tra_n(\k)= \{(x_1+v_1,\ldots,x_n+v_n)\mid (v_1,\ldots,v_n)\in \k^n\}\] the group of translations, we prove that for each non-trivial $f\in \SAut_\k(\A^n)$, there is a non-trivial translation in the closure of \[\{\rho (\tau^{-1} f^{-1} \tau f) \rho^{-1}\mid \rho,\tau \in \Tra_n(\k)\}.\]
This implies that a non-trivial closed normal subgroup of $\SAut_\k(\A^n)$ contains all translations:

\begin{mainprop}\label{PropLimitTrans} Let $\k$ be an infinite field, let $n\ge 2$ be an integer. Each closed normal subgroup $N\subseteq \SAut_\k(\A^n)$ with $N\not=\{\mathrm{id}\}$ contains all translations.
\end{mainprop}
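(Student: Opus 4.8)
The plan is to fix a nontrivial $f\in N$ and produce a single nontrivial translation inside $N$; this is enough, because $\SL_n(\k)\subseteq\SAut_\k(\A^n)$ (elements of Jacobian $1$) acts by conjugation on $\Tra_n(\k)\cong\k^n$ through the standard representation, which for $n\ge 2$ is transitive on $\k^n\smallsetminus\{0\}$, so a single nontrivial translation in the normal subgroup $N$ forces $\Tra_n(\k)\subseteq N$. Write $\tau_v\in\Tra_n(\k)$ for the translation $x\mapsto x+v$. Since $N$ is normal and closed, it contains the closure of $\{\,\tau_b(\tau_a^{-1}f^{-1}\tau_af)\tau_b^{-1}\mid a,b\in\k^n\,\}$; writing $g_{a,b}(x)=f^{-1}(f(x-b)+a)-a+b$ for a typical such element, I will look for a curve $t\mapsto(a(t),b(t))\in\k(t)^n\times\k(t)^n$, possibly with a pole at $t=0$, for which $F(t):=g_{a(t),b(t)}$ nevertheless extends to an element of $\SAut_{\k[t]}(\A^n)$ with $F(0)$ a nontrivial translation. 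As $F(\lambda)=\tau_{b(\lambda)}(\tau_{a(\lambda)}^{-1}f^{-1}\tau_{a(\lambda)}f)\tau_{b(\lambda)}^{-1}\in N$ for $\lambda\in\k^*$, the property of the topology recalled after Theorem~\ref{TheoremSimple} then gives $F(0)\in N$.

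First I would treat the affine case: if $f(x)=Ax+b_0$ with $A\in\SL_n(\k)$, then $\tau_a^{-1}f^{-1}\tau_af=\tau_{(A^{-1}-I)a}$, nontrivial for suitable $a$ when $A\ne I$, while $f=\tau_{b_0}$ is itself a nontrivial translation when $A=I$. For the general case, differentiating $f^{-1}(f(y)+a)$ in $a$ at $a=0$ and using $\det\mathrm{D}f=\Jac(f)=1$ (so $(\mathrm{D}f)^{-1}=\mathrm{adj}(\mathrm{D}f)$ is polynomial, where $\mathrm{D}f$ is the Jacobian matrix and $\mathrm{adj}$ the adjugate) yields
\[
g_{a,b}(x)=x+\big(\mathrm{adj}(\mathrm{D}f(x-b))-I\big)\,a+\sum_{|\alpha|\ge 2}c_\alpha(x-b)\,a^{\alpha},
\]
with $c_\alpha\in\k[y]^n$ the higher Taylor coefficients of $y\mapsto f^{-1}(f(y)+a)$. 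Now assume $\mathrm{D}f$ is non-constant --- equivalently, for $n\ge 2$ and $\det\mathrm{D}f=1$, that $\mathrm{adj}(\mathrm{D}f)$ is non-constant (one has $\mathrm{adj}(\mathrm{adj}(\mathrm{D}f))=\mathrm{D}f$); this is automatic in characteristic $0$. Let $\delta=\deg\mathrm{adj}(\mathrm{D}f)\ge 1$, with nonzero leading form $M_\delta$, and pick (using that $\k$ is infinite) $w,a_0\in\k^n$ with $v:=(-1)^\delta M_\delta(w)a_0\ne0$. I would take $a(t)=t^\delta a_0$ and $b(t)=w/t$. Morally this is conjugation of $f$ by the homothety $x\mapsto tx$, degenerating $f$ to its linearisation; the point --- and the reason the argument of \cite[15.11.5]{FurterKraft} does not apply --- is that homotheties do not lie in $\SAut_\k(\A^n)$, so the degeneration has to be realised purely by translations, by pushing $b$ to infinity while shrinking $a$ at a compensating rate.

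The step I expect to be the real obstacle is verifying that $F(t)=g_{t^\delta a_0,\,w/t}$ has no pole at $t=0$: substituting $b=w/t$, the pole of $\mathrm{adj}(\mathrm{D}f(x-w/t))$ has order exactly $\delta$ with leading part $(-1)^\delta t^{-\delta}M_\delta(w)$, so the $|\alpha|=1$ contribution is $v+t(\cdots)$; the pole $w/t$ cancels against the pole of $f^{-1}(f(x-w/t)+\cdots)=(x-w/t)+(\text{pole-free})$; and each term $c_\alpha(x-w/t)\,t^{\delta|\alpha|}a_0^\alpha$ with $|\alpha|\ge2$ has $t$-order $\ge\delta|\alpha|-\deg_y c_\alpha$, so I need the estimate $\deg_y c_\alpha\le 1+|\alpha|(\delta-1)$. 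I would derive this by induction from $(\beta_j+1)\,c_{\beta+e_j}=\xi_j(c_\beta)$, where $\xi_j$ is the $j$-th column of $\mathrm{adj}(\mathrm{D}f)$ acting as a polynomial vector field of degree $\le\delta$, hence raising degrees by at most $\delta-1$ (in positive characteristic a small extra argument is needed for multi-indices all of whose nonzero entries are divisible by the characteristic). Granting the estimate, $F(t)\in\SAut_{\k[t]}(\A^n)$ and $F(0)(x)=x+v$.

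Finally I would handle the remaining case, necessarily $\mathrm{char}\,\k=p>0$ with $f$ not affine and $\mathrm{D}f=A\in\SL_n(\k)$ constant. Then $f=Ax+b_0+h$ and $f^{-1}=A^{-1}x+b_0'+h'$ with $h,h'$ vectors of polynomials in $x_1^p,\dots,x_n^p$, and $f^{-1}\circ f=\mathrm{id}$ gives $g_{a,b}(x)=x+(A^{-1}-I)a+\big(h'(f(x-b)+a)-h'(f(x-b))\big)$. Splitting $h'$ into its additive part and a non-additive remainder $q'$: when $q'=0$ the element $g_{a,b}$ is already a translation, nontrivial for suitable $a$ since $f$ is not a translation; when $q'\ne0$ I would run a limiting argument of the same flavour --- sending $b$ to infinity along a suitable direction and scaling components of $a$ to $0$ so that all $x$-dependent parts of $q'(f(x-b)+a)-q'(f(x-b))$ vanish while its constant-in-$x$ part tends to a prescribed nonzero value. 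This degenerate positive-characteristic case is where I expect the bookkeeping to be heaviest, but conceptually it runs on the same mechanism: a homothety-type degeneration carried out entirely through translations. In all cases $N$ then contains a nontrivial translation, hence all of $\Tra_n(\k)$.
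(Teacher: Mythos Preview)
Your overall strategy---produce a one-parameter family $F(t)\in\SAut_{\k[t]}(\A^n)$ with $F(t_0)\in N$ for $t_0\ne0$ and $F(0)$ a nontrivial translation, then use transitivity of $\SL_n(\k)$ on $\Tra_n(\k)\setminus\{\mathrm{id}\}$---is exactly the paper's, and the closing step is the same. The construction of $F$, however, is organised quite differently. The paper does not Taylor-expand $f^{-1}(f(y)+a)$ in $a$ or invoke $\mathrm{adj}(Df)$ at all. Instead it first inserts the parameter by taking $\tau=(x_i+t\nu_i)$ and forming $g=\tau^{-1}f^{-1}\tau f\in\SAut_{\k[t]}(\A^n)$, which automatically satisfies $g(0)=\mathrm{id}$; then it writes $g_i=x_i+\sum_{j\ge1,m\ge0}q_{i,j,m}\,t^j$ with $q_{i,j,m}$ homogeneous of degree $m$ in $x$, chooses coprime $a,b\ge1$ with $a/b=\max\{m/j:q_{i,j,m}\ne0\}$, and conjugates $g(t^a)$ by $\rho_\epsilon=(x_i+t^{-b}\epsilon_i)$. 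The $t$-order of each contribution is then $aj-bm\ge0$, with equality precisely on the extremal terms, which contribute constants $q_{i,j,m}(\epsilon)$; so the specialisation at $t=0$ is a translation, nontrivial for generic $\epsilon$ since the extremal $q_{i,j,m}$ have pairwise distinct degrees and hence do not cancel.

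What this buys compared to your route: the Newton-polygon choice of $(a,b)$ replaces your degree estimate $\deg_y c_\alpha\le 1+|\alpha|(\delta-1)$ and makes the argument characteristic-free and case-free---there is no separate treatment of the affine case, no separate treatment of constant $Df$ in characteristic $p$, and no need to control higher Taylor coefficients via a vector-field recursion (which, as you note, is delicate when $p\mid\beta_j+1$). Conversely, your approach is more explicit about what the limiting translation actually is (namely $v=(-1)^\delta M_\delta(w)a_0$ in the generic case), whereas the paper's is a pure existence argument. Your plan is plausible and likely completable, but the two acknowledged gaps (the positive-characteristic degree bound and the full ``$Df$ constant, $f$ not affine'' analysis) are exactly where the paper's uniform argument is cleaner; you may find it worthwhile to swap in the paper's two-step mechanism (parametrise $\tau$ by $t$ first, then choose the exponent pair from the support of $g-\mathrm{id}$) in place of the Taylor-in-$a$ expansion.
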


It then remains to see that any normal subgroup of $\SAut_\k(\A^n)$ containing all translations contains all tame automorphisms of Jacobian~$1$. This is true for each infinite field and was proven by Drew Lewis in \cite[Theorems 6 and 9]{Lewis}; we include the proof in Section~\ref{NormalTransLin}, as it is short and nice, for self-containedness. Theorem~\ref{TheoremSimple} follows from Proposition~\ref{PropLimitTrans} and Proposition~\ref{PropLewis} below.

\begin{mainprop} \cite[Theorems 6 and 9]{Lewis} \label{PropLewis}
Let $\k$ be an infinite field and let $n\ge 2$ be an integer. 

\begin{enumerate}
\item
Any normal subgroup $N\subseteq \SAut_\k(\A^n)$ containing all translations contains also $\SL_n(\k)$.
\item
Any normal subgroup $N\subseteq \SAut_\k(\A^n)$ containing  $\SL_n(\k)$ contains all tame automorphisms of Jacobian $1$.
\end{enumerate}
\end{mainprop}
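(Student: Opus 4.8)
The plan is to reduce each of (1) and (2) to producing, inside $N$, a small explicit family of automorphisms which visibly generates the target subgroup, and to manufacture that family by a commutator trick exploiting normality: for $g\in\SAut_\k(\A^n)$ and $h\in N$ we have $ghg^{-1}\in N$ (as $N\trianglelefteq\SAut_\k(\A^n)$), hence $[g,h]:=ghg^{-1}h^{-1}\in N$; taking $g$ to be a carefully chosen \emph{nonlinear} automorphism of Jacobian $1$ and $h$ either a translation (for (1)) or an element of $\SL_n(\k)$ (for (2)), the bracket can be computed by hand and turns out to be the desired element up to factors already known to lie in $N$. Throughout I shall use freely that $\SL_n(\k)$ is generated by the elementary transvections $x_i\mapsto x_i+cx_j$ (other coordinates fixed), and that these all lie in the union of the $\SAut_\k(\A^n)$-conjugacy classes of the $(x_1+cx_2,x_2,\dots,x_n)$, $c\in\k$ — one passes between coordinate directions by conjugating with the explicit determinant-one matrices permuting and sign-changing coordinates, which is legitimate since $N$ is normal.

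For (1), it therefore suffices to show $(x_1+cx_2,x_2,\dots,x_n)\in N$ for every $c\in\k$. For $n\ge 3$ this works in all characteristics: with $g=(x_1+x_2x_3,x_2,\dots,x_n)$ and $h=\tau_v\in\Tra_n(\k)\subseteq N$ a short computation gives $[g,\tau_v]=(x_1+v_3x_2+v_2x_3-v_2v_3,x_2,\dots,x_n)$, and specialising $v_2=0$ yields exactly $(x_1+v_3x_2,x_2,\dots,x_n)\in N$ for all $v_3$. For $n=2$ one uses $g=(x_1+x_2^2,x_2)$, for which $[g,\tau_v]=(x_1+2v_2x_2-v_2^2,x_2)$; since $\Tra_2(\k)\subseteq N$, this gives all transvections $(x_1+cx_2,x_2)$ as soon as $\car\k\ne 2$. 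The characteristic-$2$ case in dimension $2$ is the genuinely delicate point, discussed at the end.

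For (2), recall first that the tame automorphisms of Jacobian $1$ are generated by $\SL_n(\k)$, the translations, and the elementary shears $s_p:=(x_1+p(x_2,\dots,x_n),x_2,\dots,x_n)$, and that the $s_p$ compose additively, so they are generated by the monomial shears $s_m$; in particular when $n=2$ this group is all of $\SAut_\k(\A^2)$, which is why (2) gives the simplicity statement there. Assume $\SL_n(\k)\subseteq N$. One first puts the translations into $N$: with $A=(x_1+x_2,x_2,\dots,x_n)\in\SL_n(\k)\subseteq N$ and $\tau_v\in\Tra_n(\k)$ one computes $[\tau_v,A]=(x_1-v_2,x_2,\dots,x_n)\in N$, and then all translations by conjugating with permutation-type matrices. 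Next one produces a nonzero quadratic shear: with a diagonal $D\in\SL_n(\k)$ of suitable eigenvalue $\lambda$ and $g=(x_1+x_2^2,x_2,\dots,x_n)$ one gets $[D,g]=(x_1+(\lambda^3-1)x_2^2,x_2,\dots,x_n)$, and since $\k$ is infinite one may pick $\lambda$ with $\lambda^3\ne 1$, so $s_{cx_2^2}\in N$ for some $c\ne 0$; conjugating by further diagonal matrices of $\SL_n(\k)$ spreads this to every coefficient — immediately for $n\ge 3$ using a rank-two subtorus, and with a bit more care for $n=2$ — and the mixed quadratic shears $s_{cx_2x_3}$ ($n\ge 3$) are obtained the same way. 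Finally one runs an induction on $\deg m$: a commutator $[s_m,t_{m'}]$ of shears in two different coordinate directions ($s_m=(x_1+m,\dots)$, $t_{m'}=(x_2+m',\dots)$) is again a shear whose top-degree part is a monomial of strictly larger degree (morally $m$ with one occurrence of a variable replaced by $m'$), so combining such brackets with the lower-degree shears already in $N$ and the diagonal rescaling yields every $s_m$, hence every $s_p$; together with $\SL_n(\k)$ and $\Tra_n(\k)$ this is the whole Jacobian-$1$ tame group.

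The main obstacle — and the precise reason one needs $\k$ infinite, and must be careful — is the behaviour in small positive characteristic, most acutely $\car\k=2$ when $n=2$: there both the difference operator $p(x)\mapsto p(x)-p(x-a)$ and the diagonal-torus rescalings degenerate (e.g.\ $x^2-(x-a)^2=-a^2$ has no linear term, and a diagonal conjugation only multiplies a shear's coefficient by a cube), so the naive brackets collapse to translations and yield nothing new, while the quadratic/linear parts of the candidate elements stay irretrievably coupled. Resolving this requires genuinely more work: higher-degree auxiliary automorphisms, several brackets in combination, and exploiting the infinitude of $\k^*$ — via the scarcity of roots of unity of a prescribed order, or via interpolation arguments — to disentangle the coupled terms. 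This is exactly the content of the arguments of \cite{Lewis}, which we follow.
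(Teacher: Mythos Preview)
Your commutator set-up is the right idea and, for part~(1) in the easy cases, it matches the paper exactly. There are however two real issues.

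For part~(2) you overcomplicate things and the inductive step does not work as stated. The bracket you compute with the diagonal $D$ and $g=(x_1+x_2^2,\ldots)$ works verbatim with $q$ replaced by \emph{any} monomial: taking $\delta=((\alpha_2\cdots\alpha_n)^{-1}x_1,\alpha_2x_2,\ldots,\alpha_nx_n)\in\SL_n(\k)\subseteq N$ and $e_q=(x_1+q,\ldots)$ with $q=\xi\,x_2^{i_2}\cdots x_n^{i_n}$ gives
\[
e_q^{-1}\,\delta^{-1}\,e_q\,\delta=\bigl(x_1+(\alpha_2^{\,i_2+1}\cdots\alpha_n^{\,i_n+1}-1)\,\xi\,x_2^{i_2}\cdots x_n^{i_n},\,x_2,\ldots,x_n\bigr)\in N,
\]
and since $\k$ is infinite one can always choose the $\alpha_j$ so that the scalar factor is nonzero, then adjust $\xi$. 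That is the entire proof in the paper: no induction on degree, no preliminary reduction to translations, no case distinction $n=2$ versus $n\ge3$, and it works uniformly in all characteristics. Your alternative route via $[s_m,t_{m'}]$ breaks down precisely at $n=2$: the only available second direction is $t_{m'}=(x_1,x_2+m'(x_1))$ with $m'$ depending on $x_1$, and then $t_{m'}\,s_m\,t_{m'}^{-1}$ is \emph{not} of the form $(x_1+\text{something},x_2)$ --- the second coordinate picks up a term $m'(x_1-m(\cdots))-m'(x_1)$ --- so the commutator is not a shear and your induction does not get off the ground.

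For part~(1), your treatment of $n\ge3$ and of $\car\k\ne2$ coincides with the paper's, but for $n=2$ in characteristic~$2$ you only gesture at \cite{Lewis} without giving the argument. Since it is short, it should be written out: with $q=\theta\nu x_2^{3}$, $\epsilon_2=\mu$ and then $q=\mu\nu x_2^{3}$, $\epsilon_2=\theta$, both commutators $h_{q,\epsilon}$ lie in $N$; composing them, the quadratic terms cancel (as $2=0$), and after absorbing the constant by a translation one is left with $(x_1+\theta\mu\nu(\theta+\mu)x_2,\,x_2)\in N$. Since $\lvert\k\rvert>2$ one may pick $\theta\ne\mu$ in $\k^*$ and then $\nu$ so that $\theta\mu\nu(\theta+\mu)$ equals any prescribed $\lambda$.
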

Note that if $\k$ is finite, then most of the results become false. As said before, we cannot really work with the topology here, so  for finite fields $\k$ we consider the group as a discrete one, and obtain the following results in dimension $2$, that is opposite to Proposition~\ref{PropLewis}:
\begin{mainprop}\label{PropFiniteA2}
For each finite field $\k$, there is a surjective group homomorphism 
\[\SAut_\k(\A^2)\twoheadrightarrow (\k[x],+)\]
whose kernel contains all translations and all elements of $\SL_2(\k)$. 

In particular, $\SAut_\k(\A^2)$ is not a perfect group, is not finitely generated, and the normal subgroup generated by $\SAff_\k(\A^2)$ is a strict subgroup of $\SAut_\k(\A^2)$.

There exists moreover a a surjective group homomorphism 
\[\SAut_{\F_2}(\A^2)\twoheadrightarrow (\F_2[x],+)\]
whose kernel contains all translations but does not contain $\SL_2(\F_2)$.
\end{mainprop}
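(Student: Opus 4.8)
Write $q=|\k|$ and $p=\car(\k)$. The plan is to exploit the amalgamated product decomposition
$\SAut_\k(\A^2)=\SAff_\k(\A^2)\ast_{\SAff_\k(\A^2)\cap\SJ_\k(\A^2)}\SJ_\k(\A^2)$, with $\SJ_\k(\A^2)$ the de~Jonqui\`eres automorphisms of Jacobian~$1$, which follows from the Jung--van der Kulk theorem by restriction to Jacobian~$1$. Put $V=\{(x,y)\mapsto(x,y+t(x))\mid t\in\k[x]\}\cong(\k[x],+)$ and $W=\{(x,y)\mapsto(ax+b,a^{-1}y)\mid a\in\k^\ast,\ b\in\k\}\cong\Aff_1(\k)$, so that $\SJ_\k(\A^2)=V\rtimes W$, with $W$ acting on $V=\k[x]$ by the twisted formula $(a,b)\cdot t(x)=a^{-1}t(a^{-1}(x-b))$, and $\SAff_\k(\A^2)\cap\SJ_\k(\A^2)=V_{\le 1}\rtimes W$, where $V_{\le 1}\subseteq V$ is the $W$-submodule of polynomials of degree~$\le 1$.

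Since $(\k[x],+)$ is abelian, a homomorphism $\SAut_\k(\A^2)\to(\k[x],+)$ amounts to a pair of homomorphisms out of $\SAff_\k(\A^2)$ and out of $\SJ_\k(\A^2)$ agreeing on the common subgroup. As $\SAff_\k(\A^2)$ is generated by translations together with $\SL_2(\k)$, requiring both of these to lie in the kernel forces the homomorphism on $\SAff_\k(\A^2)$ to be trivial, and then the one on $\SJ_\k(\A^2)$ must vanish on $\SAff_\k(\A^2)\cap\SJ_\k(\A^2)$. Using that $W$ acts trivially on the coinvariants $V_W$, one checks that homomorphisms $V\rtimes W\to A$ into an abelian group $A$ that kill $V_{\le 1}\rtimes W$ correspond bijectively to $\F_p$-linear maps out of $Q:=V_W/\overline{V_{\le 1}}$, $\overline{V_{\le 1}}$ being the (finite-dimensional) image of $V_{\le 1}$; concretely the universal such map sends $(x,y)\mapsto(ax+b,a^{-1}y+t(x))$ to the class of $a\cdot t(x)$ in $Q$. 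Everything therefore reduces to proving $\dim_{\F_p}Q=\aleph_0$: granting this, any $\F_p$-linear isomorphism $Q\cong(\k[x],+)$ composed with the universal map gives the first required surjection $\phi$, whose kernel contains $\SAff_\k(\A^2)$, hence all translations and all of $\SL_2(\k)$. The remaining three assertions follow at once: a group with a nontrivial abelian quotient is not perfect; $(\k[x],+)$ is an infinite group of exponent~$p$, so it --- and hence $\SAut_\k(\A^2)$ --- is not finitely generated; and $\nn{\SAff_\k(\A^2)}$ is contained in the proper normal subgroup $\Ker\phi$.

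The main obstacle is exactly this dimension count --- this is where the finiteness of $\k$ genuinely changes the situation, the analogous coinvariants over an infinite field being finite-dimensional. I would argue it as follows. Filtering $V$ by degree, the associated graded pieces $\mathrm{gr}_m$ are one-dimensional over~$\k$ with $W$ acting through the character $(a,b)\mapsto a^{-(m+1)}$, so $(\mathrm{gr}_m)_W\neq 0$ precisely when $(q-1)\mid (m+1)$ --- which occurs for infinitely many~$m$ --- and one must check that these contributions survive into $V_W$. The cleanest route uses the translation subgroup $T=(\k,+)\subseteq W$: it acts freely on $\A^1$, and the quotient map $x\mapsto x^{q}-x$ is an \'etale $T$-torsor onto $\A^1/T$, so $\k[x]$ is a free rank-one module over $\mathcal{O}(\A^1/T)[T]$ (as $\mathrm{Pic}(\A^1/T)=0$); hence $V_T=\k[x]_T\cong\mathcal{O}(\A^1/T)=\k[u]$ is infinite-dimensional. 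The residual group $\k^\ast=W/T$ then acts $\k$-linearly on $\k[u]$, scaling each line $\k u^j$ by a character $a\mapsto a^{m(j)}$ with $m(j)$ affine in~$j$ modulo $q-1$; as $|\k^\ast|=q-1$ is prime to~$p$, coinvariants coincide with invariants, which form the span of the $u^j$ for $j$ in a fixed arithmetic progression --- still infinite-dimensional. Removing the finite-dimensional $\overline{V_{\le 1}}$ leaves $Q$ infinite-dimensional.

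The last statement is obtained by repeating the construction over $\F_2$ without trivializing the affine part. Via its faithful action on the four points of $\A^2(\F_2)$ the group $\SAff_{\F_2}(\A^2)$ is identified with $S_4$; take there the sign homomorphism, landing in $\Z/2=\F_2\subseteq\F_2[x]$. It is trivial on translations --- each translation of $\F_2^2$ is a product of two transpositions --- but nontrivial on $\SL_2(\F_2)\cong S_3$, which contains transpositions. Its restriction to $\SAff_{\F_2}(\A^2)\cap\SJ_{\F_2}(\A^2)$ extends to a surjection $\SJ_{\F_2}(\A^2)\twoheadrightarrow\F_2[x]$: the induced map $(\SAff_{\F_2}(\A^2)\cap\SJ_{\F_2}(\A^2))^{\mathrm{ab}}\to\SJ_{\F_2}(\A^2)^{\mathrm{ab}}$ is injective since the class of $x$ is nonzero in $V_W=(\F_2[x])_W$ (clear from the description above), and $\SJ_{\F_2}(\A^2)^{\mathrm{ab}}$ has the infinite-dimensional $V_W$ as a quotient, so the given $\F_2$-valued functional extends first to $\SJ_{\F_2}(\A^2)^{\mathrm{ab}}$ and then to an $\F_2$-linear surjection onto $\F_2[x]$. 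Gluing the two pieces over the amalgam yields a homomorphism $\SAut_{\F_2}(\A^2)\twoheadrightarrow(\F_2[x],+)$ whose kernel contains every translation but not all of $\SL_2(\F_2)$.
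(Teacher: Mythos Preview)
Your proof is correct and follows the same overall strategy as the paper: both use the amalgamated product structure of $\SAut_\k(\A^2)$ coming from Jung--van der Kulk to reduce the construction of the homomorphism to the abelianisation of the de Jonqui\`eres group, which in turn comes down to showing that the $W$-coinvariants $V_W=\k[x]/\langle s-as(ax+b)\rangle$ are infinite-dimensional; the $\F_2$ case is handled in both via the sign of the permutation on $\A^2(\F_2)$. The one genuine difference is how the infinite-dimensionality is established. The paper works by hand: using the basis $x^iR^j$ of $\k[x]$ (with $R=x^q-x$, $0\le i\le q-1$), a triangularity argument shows that the classes of $x^{q-1}R^{m(q-1)-1}$ for $m\ge 1$ remain linearly independent in the quotient. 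Your route is more structural: first pass to $T$-coinvariants via the Artin--Schreier torsor $x\mapsto x^q-x$, identifying $\k[x]_T\cong\k[u]$, then diagonalise the residual $\k^*$-action (legitimate since $\gcd(q-1,p)=1$) to see that $u^j$ survives precisely when $(q-1)\mid(j+1)$. The two computations match up exactly --- the paper's surviving elements $x^{q-1}R^{m(q-1)-1}$ are representatives of your $u^{m(q-1)-1}$ --- so your argument recovers the same basis, just with the bookkeeping hidden in the torsor formalism. One small imprecision: the freeness of $\k[x]$ over $\k[u][T]$ needs $\mathrm{Pic}(\k[u][T])=0$ rather than $\mathrm{Pic}(\k[u])=0$; this does hold (since $\k[T]$ is local Artinian in characteristic $p$), or alternatively one can simply exhibit $x^{q-1}$ as a free generator by checking that $\{(x+c)^{q-1}\}_{c\in\k}$ is a $\k[u]$-basis of $\k[x]$.
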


We finish this introduction by giving a result on $\Aut_\k(\A^n)$. 
\begin{mainprop}\label{NAut}
Let $\k$ be an infinite field, let $n\ge 2$. If $N\subseteq \Aut_\k(\A^n)$ is a normal subgroup, with $\Jac(N)=\k^*$, then $N=\Aut_\k(\A^n)$.\end{mainprop}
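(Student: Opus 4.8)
The plan is to use the split exact sequence
$$1\longrightarrow\SAut_\k(\A^n)\longrightarrow\Aut_\k(\A^n)\stackrel{\Jac}{\longrightarrow}\k^*\longrightarrow1,$$
a section being $\lambda\mapsto h_\lambda=(\lambda x_1,x_2,\dots,x_n)$. Since $\Jac(N)=\k^*$, the subgroup $N$ meets every coset of $\SAut_\k(\A^n)$, so $N\cdot\SAut_\k(\A^n)=\Aut_\k(\A^n)$ and it is enough to prove $\SAut_\k(\A^n)\subseteq N$. Set $N_0=N\cap\SAut_\k(\A^n)$. Because $N$ is normal in the \emph{whole} of $\Aut_\k(\A^n)$ — and not only inside $\SAut_\k(\A^n)$, which is the setting of Proposition~\ref{PropLimitTrans} — so is $N_0$, and we are reduced to showing $N_0=\SAut_\k(\A^n)$. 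We shall also use repeatedly that, $N$ being normal, $f\in N$ if and only if $afa^{-1}\in N$ for a fixed $a\in\Aut_\k(\A^n)$; in particular membership in $N$ is unaffected by conjugating by translations or homotheties.

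The engine is that $\Aut_\k(\A^n)$, unlike $\SAut_\k(\A^n)$, contains all homotheties $h_\lambda$ and all scalings $\delta_\mu=(\mu x_1,\dots,\mu x_n)$; conjugating by these is precisely the ingredient of \cite[Proposition~15.11.5]{FurterKraft}, and the hypothesis $\Jac(N)=\k^*$ is what takes over the role of closedness there, since only finitely many group operations will be performed. First one produces a non-trivial translation in $N_0$. If $N$ already contains an affine automorphism of Jacobian $\neq1$ this is immediate: for $g=\tau_b\circ A\in N$ with $b\neq0$ one gets $g\delta_\mu g^{-1}\delta_\mu^{-1}=\tau_{(1-\mu)b}\in N_0$, non-trivial for $\mu\neq1$; for $g=A\in\GL_n(\k)\cap N$ with $\det A\neq1$ (hence $A\neq\mathrm{id}$) one gets $A\tau_vA^{-1}\tau_v^{-1}=\tau_{(A-\mathrm{id})v}\in N_0$, non-trivial for $v\notin\ker(A-\mathrm{id})$. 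In general, pick $g\in N$ with $\Jac(g)=\lambda\neq1$; after possibly replacing $g$ by another element of $N$ of the same Jacobian and repositioning it so that $g(0)=0$, the family $s\mapsto\delta_{1/s}\,g\,\delta_{1/s}^{-1}$ lies in $\Aut_{\k[s]}(\A^n)$, belongs to $N$ for every $s\in\k^*$, and specialises at $s=0$ to the linear part $A$ of $g$, which satisfies $\det A=\lambda\neq1$. The key point — and, I expect, the main obstacle — is to force this specialisation to lie in $N_0$, and not merely in its closure, by an honest finite computation, combining it with the extra elements of $N$ of prescribed Jacobian provided by $\Jac(N)=\k^*$; one then concludes by the affine case above.

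Once $N_0$ contains a non-trivial translation, Proposition~\ref{PropLewis}(1) gives $\SL_n(\k)\subseteq N_0$ and then Proposition~\ref{PropLewis}(2) gives that $N_0$ contains every tame automorphism of Jacobian $1$. For $n=2$ this is all of $\SAut_\k(\A^2)$, so $N_0=\SAut_\k(\A^2)$ and $N=\Aut_\k(\A^2)$. For $n\geq3$ one still has to capture the non-tame automorphisms of Jacobian $1$; here I would run the homothety degeneration again, now on an arbitrary $f\in\SAut_\k(\A^n)$ repositioned so that $f(0)=0$ (whose linear part already lies in $\SL_n(\k)\subseteq N_0$), exploiting once more that $N_0$ contains $\SL_n(\k)$ and all translations and is normal in $\Aut_\k(\A^n)$, to conclude $f\in N_0$. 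The delicate part throughout is the same: turning the degeneration by homotheties into a finite chain of group operations, which is exactly where the hypothesis $\Jac(N)=\k^*$ must be used, and doing so with enough control to reach all of $\SAut_\k(\A^n)$ when $n\geq3$.
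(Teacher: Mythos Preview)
Two things. First, the paper's own proof opens with ``As $N$ is a closed normal subgroup of $\Aut_\k(\A^n)$'', so closedness is being tacitly assumed (it is used in Lemma~\ref{NclosedSLn} and in both later applications of the Alexander trick). You have taken the statement at face value and tried to dispense with closedness, which is more ambitious; but the gap you yourself flag is real. The family $s\mapsto\delta_s^{-1}g\,\delta_s$ lies in $N$ only for $s\in\k^*$, and the hypothesis $\Jac(N)=\k^*$ does not by itself let you extract its value at $s=0$ by finitely many group operations. Your special cases (when $N$ already contains an affine element of Jacobian $\neq1$) are fine, but the reduction of the general case to the affine case genuinely needs a closure argument, and you do not supply one.

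Second, even granting closedness, your final step is not yet a proof. Conjugating an arbitrary $f\in\SAut_\k(\A^n)$ by homotheties does nothing: $\delta_s^{-1}f\delta_s\in N$ holds only if $f\in N$ already, which is what you want to show. The paper's device is to consider instead $h=(\alpha^{-1}f\alpha)\circ f^{-1}\in\Aut_{\k[t]}(\A^n)$ with $\alpha=(tx_1,\dots,tx_n)$: for $t_0\in\k^*$ this equals $\alpha(t_0)^{-1}\cdot\bigl(f\,\alpha(t_0)\,f^{-1}\bigr)$, a product of two elements of $N$ once one knows $\GL_n(\k)\subseteq N$ (not merely $\SL_n(\k)$; the paper secures this first, by a separate Alexander-trick argument that uses both $\Jac(N)=\k^*$ and the fact that translations are already in $N$ to reposition to the origin). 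Closedness then forces $h(0)=g'(0)\circ f^{-1}\in N$, and since $g'(0)\in\GL_n(\k)\subseteq N$ one concludes $f\in N$. So the ingredients missing from your sketch are the intermediate step $\GL_n(\k)\subseteq N$ and this commutator reformulation of the degeneration.
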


\section{Notation and reminders}\label{NotRem}
If $R$ is an integral domain and $n\ge 1$ is an integer, we denote by $\End_{R}(\A^n)$ the set of \emph{$R$-endomorphisms} of the affine space $\A^n$. Each element $f\in\End_{R}(\A^n)$ can be written as $f=(f_1,\ldots,f_n)$, where $f_1,\ldots,f_n\in R[x_1,\ldots,x_n]$ are polynomials in $n$ variables, and corresponds to the endomorphism 
\[(x_1,\ldots,x_n)\mapsto (f_1(x_1,\ldots,x_n),\ldots,f_n(x_1,\ldots,x_n))\] of $\A^n$. It also corresponds to an element $f^*\in \End_R(R[x_1,\ldots,x_n])$ that sends $P$ onto $f\circ P=P(f_1,\ldots,f_n)$. 

The \emph{derivative} of $f=(f_1,\ldots,f_n)$ is the matrix 
\[D(f)=Df=(\frac{\partial f_i}{\partial x_j})_{i,j=1}^n\in \mathrm{Mat}_{n\times n}(\k[x_1,\ldots,x_n]).\]
and its determinant is the \emph{Jacobian} of $f$, that is
\[\Jac(f)=\det(Df)\in \k[x_1,\ldots,x_n].\]
We can compose endomorphisms; the group of invertible elements is denoted by $\Aut_{R}(\A^n)$ and consists of \emph{$R$-automorphisms} of the affine space $\A^n$.
As $D(f\circ g)=g^*(Df)\cdot Dg$ for all $f,g\in \End_{R}(\A^n)$, one finds that $\Jac(f)\in (R[x_1,\ldots,x_n])^*=R^*$ for each $f\in \Aut_R(\A^n)$ (the converse implication is the classical Jacobian conjecture, in the case where $R$ is a field of characteristic $0$). We obtain a natural group homomorphism
\[\Jac\colon \Aut_{R}(\A^n) \to R^*\]
whose kernel is the group $\SAut_R(\A^n)$ of \emph{automorphisms of Jacobian $1$}. Apart from this subgroup, the group $\Aut_{R}(\A^n)$ contains some classical subgroups, of respectively linear automorphisms, translations, affine automorphisms and triangular automorphisms:
\[\begin{array}{rcl}
\GL_n(R)&=&\{(a_{11}x_1+\cdots+a_{1n}x_n,\ldots,a_{n1}x_1+\cdots+a_{nn}x_n) \mid  \det((a_{ij})_{i,j=1}^n)\in  R^*\}\\
\Tra_n(R)&=& \{(x_1+v_1,\ldots,x_n+v_n)\mid (v_1,\ldots,v_n)\in \k^n\}\\
\Aff_n(R)&=& \{\tau\circ \alpha\mid \alpha\in \GL_n(R), \tau\in \Tra_n(R)\}=\Tra_n(R)\rtimes \GL_n(R)\\
\Tri_n(R)&=& \{(a_1x_1+b_1, \ldots,a_nx_n+b_n)\mid a_i\in R^*,  b_i\in R[x_1,\ldots,x_{i-1}], 1\le i\le n\}
\end{array}\]
The group generated by affine and triangular automorphisms is the tame subgroup:
\[\Tame_R(\A^n)=\langle \Aff_n(R), \Tri_n(R)\rangle=\langle \GL_n(R), \Tri_n(R)\rangle.\]
The equality $\Tame_\k(\A^2)=\Aut_\k(\A^2)$ is known to be true for any field $\k$ (Jung-Van der Kulk's theorem, \cite{Jung}). However, $\Tame_{\k[x]}(\A^2)\subsetneq \Aut_{\k[x]}(\A^2)$ \cite[Theorem 1.4]{Nagata} and $\Tame_{\k}(\A^3)\subsetneq \Aut_{\k}(\A^3)$ if $\mathrm{char}(\k)=0$  \cite{ShestakovUmirbaev}.

If $B$ is an integral domain and $A$ is a subring of $B$, we have an inclustion $\Aut_A(\A^n)\subseteq \Aut_B(\A^n)$ for each $n\ge 1$. We will often study this for the rings $\k\subseteq \k[t]\subseteq \k[t,\frac{1}{t}]$, where $\k$ is a field. If $g\in \Aut_{\k[t]}(\A^n)$, one can replace $t$ with any element of $\k$ and obtain an element of $\Aut_\k(\A^n)$. Hence, $g$ corresponds to a map
\[\A^1(\k)\to \Aut_\k(\A^n),\]
that corresponds actually a morphism, when $\Aut_\k(\A^n)$ is endowed with the structure of an \emph{ind}-group (see \cite{FurterKraft}). For this, we fix a degree $d$, define
\[\End_{\le d}(\A^n)=\{f=(f_1,\ldots,f_n)\in \End(\A^n) \mid \deg(f_i)\le d \forall i\},\]
that has the structure of an affine space defined over any field and observe that we have closed embeddings
\[\End_{\le 1}(\A^n)\hookrightarrow \End_{\le 2}(\A^n)\hookrightarrow \End_{\le 3}(\A^n)\hookrightarrow \cdots\]
We may then put a topology on $\End_\k(\A^n)$, for each infinite field $\k$, such that a subset is closed if its intersection with any $\End_{\le d}(\A^n)$ is closed. This makes $\Aut_\k(\A^n)$ locally closed in $\End_\k(\A^n)$ and $\SAut_\k(\A^n)$ to be closed. Moreover, the topology is also the strongest one that makes all morphisms $A\to \Aut_\k(\A^n)$ induced by elements of $\Aut_A(\A^n)$ continuous (see \cite{Blanc}).  As explained before, there is a weaker topology one can also put on $\End_\k(\A^n)$ (see \cite{Stampfli}). Here, $\SAut_\k(\A^n)$ is still closed in both $\Aut_\k(\A^n)$ and $\End_\k(\A^n)$.

\section{Translations - Centralisers and normal subgroups}
As explained in the introduction, we say that an element of $\Aut_R(\A^n)$ (where $n\ge 1$ is an integer and $R$ is an integral domain) is a \emph{translation} if it is of the form
\[(x_1+c_1,x_2+c_2,\ldots,x_n+c_n)\]
where $c_1,\ldots,c_n\in R$.
\subsection{Centralisers of translations}

A simple calculation (Lemma~\ref{Lem:CentraliserTranslations} below) shows that the group of translations is its own centraliser in $\End_R(\A^n)$ when $R$ is an infinite integral domain. For this, we first need the following technical lemma.
\begin{lemma}\label{lem:AdditiveSet}
Let $R$ be an infinite integral domain, let $n\ge 1$ be an integer and let $f\in R[x_1,\ldots,x_n]$ be a polynomial in $n$ variables such that
\[f(x_1+c,x_2,\ldots,x_n)=f(x_1,x_2,\ldots,x_n)\]
for each $c\in R$. Then $f\in R[x_2,\ldots,x_n]$.
\end{lemma}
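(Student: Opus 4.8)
The plan is to expand $f$ as a polynomial in $x_1$ over the ring $R[x_2,\ldots,x_n]$, say $f=\sum_{i=0}^{d}g_i\,x_1^i$ with $g_i\in R[x_2,\ldots,x_n]$, and to show that $g_i=0$ for every $i\ge 1$; equivalently, assuming for contradiction that $f\notin R[x_2,\ldots,x_n]$, I let $d=\deg_{x_1}(f)\ge 1$ so that $g_d\neq 0$, and aim to derive $g_d=0$.

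The first step is to turn the family of hypotheses (one for each $c\in R$) into a single polynomial identity. I introduce a new variable $t$ and set
\[
h:=f(x_1+t,x_2,\ldots,x_n)-f(x_1,\ldots,x_n)\ \in\ D[t],\qquad D:=R[x_1,\ldots,x_n].
\]
By hypothesis, for every $c\in R$ the image of $h$ under the evaluation $t\mapsto c$ is the zero element of $D$. Since $R$ is infinite and $D$ is an integral domain containing $R$, the polynomial $h\in D[t]$ has infinitely many roots in $D$, hence $h=0$ in $D[t]$. (This is the only place the infinitude of $R$ is used.)

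The second step extracts $g_d=0$ from $h=0$ by reading off the top degree in $t$. Since the $g_i$ do not involve $t$ or $x_1$, we have $h=\sum_{i=1}^{d}g_i\bigl((x_1+t)^i-x_1^i\bigr)$, and
\[
(x_1+t)^i-x_1^i=\sum_{k=1}^{i}\binom{i}{k}x_1^{\,i-k}t^k = t^i+\sum_{k=1}^{i-1}\binom{i}{k}x_1^{\,i-k}t^k
\]
is a polynomial in $t$ of degree exactly $i$ with leading coefficient $\binom{i}{i}=1$. Therefore the coefficient of $t^d$ in $h$, viewed in $D[t]$, equals $g_d$, so $h=0$ forces $g_d=0$, contradicting $d=\deg_{x_1}(f)$. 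Hence $f\in R[x_2,\ldots,x_n]$. (Equivalently, without contradiction: this argument gives $g_d=0$ for the current top degree, and since $f$ still satisfies the hypothesis one concludes by downward induction on $d$, the base case $d\le 0$ being trivial.)

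I do not expect a serious obstacle here; the only point requiring a little care is characteristic $p$. A naive approach—differentiating in $x_1$, or reading off the coefficient of $t\,x_1^{d-1}$—would produce a factor $d$ that may vanish modulo $p$; the remedy, as above, is to treat $t$ as a genuine new variable and use the top $t$-degree coefficient, where the binomial coefficient is $\binom{i}{i}=1$ in every characteristic.
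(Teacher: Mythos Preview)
Your proof is correct and follows essentially the same route as the paper: both introduce an extra variable ($t$ for you, $x_{n+1}$ in the paper) and use that a one-variable polynomial over an integral domain with infinitely many roots must vanish identically to obtain $f(x_1+t,x_2,\ldots,x_n)=f(x_1,\ldots,x_n)$ as a polynomial identity. The only difference is cosmetic: from this identity the paper simply substitutes $x_1=0$ and renames $t$ as $x_1$ to get $f=f(0,x_2,\ldots,x_n)$ directly, while you read off the top $t$-coefficient.
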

\begin{proof}
We add one variable and consider the polynomial
\[g=f(x_1+x_{n+1},x_2,\ldots,x_n)-f(x_1,x_2,\ldots,x_n)\in R[x_1,\ldots,x_{n+1}].\]
Replacing $x_{n+1}$ with any element $c\in R$, the polynomial $g$ becomes zero. We now prove that this implies that $g$ is the zero polynomial. For this, we embedd $R[x_1,\ldots,x_{n+1}]$ into $K[x_{n+1}]$, where $K$ is the field of fraction of the integral domain $R[x_1,\ldots,x_n]$. After doing this, $g$ becomes a polynomial in one variable over an integral domain, which has infinitely many roots and is thus the zero polynomial.

We have then proven the following equality in  $R[x_1,\ldots,x_{n+1}]$:
\[f(x_1+x_{n+1},x_2,\ldots,x_n)=f(x_1,x_2,\ldots,x_n).\]
Replacing $x_1$ with $0$ and $x_{n+1}$ with $x_1$, we obtain
\[f(x_{1},x_2,\ldots,x_n)=f(0,x_2,\ldots,x_n)\in R[x_2,\ldots,x_n].\qedhere\]
\end{proof}

\begin{lemma}\label{Lem:CentraliserTranslations}
Let $R$ be an infinite integral domain, let $n\ge 1$ be an integer and let $f\in \End_R(\A^n)$ be an element that commutes with all translations. Then, $f$ is itself a translation.

In other words, the group of translations is its own centraliser in $\End_R(\A^n)$.
\end{lemma}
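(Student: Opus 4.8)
The plan is to unwind the commutation hypothesis into a family of polynomial identities and then feed them into Lemma~\ref{lem:AdditiveSet}. Write $f=(f_1,\ldots,f_n)$ with $f_i\in R[x_1,\ldots,x_n]$, and for $c=(c_1,\ldots,c_n)\in R^n$ let $\tau_c=(x_1+c_1,\ldots,x_n+c_n)$ be the corresponding translation. Computing both composites, $f\circ\tau_c$ is the endomorphism $(x_1,\ldots,x_n)\mapsto(f_1(x_1+c_1,\ldots,x_n+c_n),\ldots,f_n(x_1+c_1,\ldots,x_n+c_n))$, while $\tau_c\circ f$ is $(x_1,\ldots,x_n)\mapsto(f_1(x_1,\ldots,x_n)+c_1,\ldots,f_n(x_1,\ldots,x_n)+c_n)$. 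Hence $f$ commutes with every translation if and only if
\[f_i(x_1+c_1,\ldots,x_n+c_n)=f_i(x_1,\ldots,x_n)+c_i\qquad(1\le i\le n)\]
holds for all $(c_1,\ldots,c_n)\in R^n$.

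First I would show that each $f_i$ involves only the variable $x_i$. Fixing $i$ and an index $j\ne i$, specialise the identity above to $c_k=0$ for all $k\ne j$; its $i$-th component becomes $f_i(x_1,\ldots,x_j+c_j,\ldots,x_n)=f_i(x_1,\ldots,x_n)$ for every $c_j\in R$. After renaming the variables so that $x_j$ plays the role of the first one, Lemma~\ref{lem:AdditiveSet} applies and shows that $f_i$ does not involve $x_j$. Letting $j$ range over all indices different from $i$ yields $f_i\in R[x_i]$.

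Next I would pin down the one-variable polynomial $f_i\in R[x_i]$. Specialising the $i$-th identity to $c_k=0$ for $k\ne i$ gives $f_i(x_i+c_i)=f_i(x_i)+c_i$ for every $c_i\in R$. Set $g_i=f_i-x_i\in R[x_i]$; then $g_i(x_i+c_i)=g_i(x_i)$ for all $c_i\in R$, so the $n=1$ case of Lemma~\ref{lem:AdditiveSet} (in which the set of remaining variables is empty) forces $g_i\in R$, say $g_i=b_i$. Hence $f_i=x_i+b_i$, so $f=(x_1+b_1,\ldots,x_n+b_n)$ is a translation. The reverse inclusion — that every translation centralises every translation — is immediate, which yields the displayed reformulation that $\Tra_n(R)$ is its own centraliser in $\End_R(\A^n)$.

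I do not expect a genuine obstacle: the statement reduces cleanly to Lemma~\ref{lem:AdditiveSet}. The only points needing a little care are the bookkeeping when permuting variables to invoke that lemma, and the observation that its degenerate case $n=1$ with no remaining variables is exactly what certifies that $g_i$ is a constant. One could alternatively bypass the iteration entirely by first upgrading the specialised identities to a single polynomial identity $f_i(x_1+y_1,\ldots,x_n+y_n)=f_i(x_1,\ldots,x_n)+y_i$ in $2n$ variables, using the field-of-fractions argument already employed in the proof of Lemma~\ref{lem:AdditiveSet}, and then setting $x_1=\cdots=x_n=0$.
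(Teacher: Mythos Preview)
Your proposal is correct and follows essentially the same approach as the paper: both arguments unwind the commutation with axis-parallel translations into the identities $f_i(x_1,\ldots,x_j+c,\ldots,x_n)=f_i+\delta_{ij}c$ and then invoke Lemma~\ref{lem:AdditiveSet} to strip away the dependence on each variable. The only difference is organisational: the paper treats all $n$ components simultaneously for each coordinate direction and combines at the end, whereas you first isolate $f_i\in R[x_i]$ and then reduce to the one-variable case of Lemma~\ref{lem:AdditiveSet}.
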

\begin{proof}
We write $f=(f_1,\ldots,f_n)$, with $f_1,\ldots,f_n\in R[x_1,\ldots,x_n]$. 

For each $c\in R$, the endomorphism $f$ commutes with $\tau=(x_1+c,x_2,\ldots,x_n)$. The $n$ coordinates of $f\circ\tau=\tau \circ f$ give the following $n$ equations:
\begin{align*}f_1(x_1+c,x_2,\ldots,x_n)&=f_1(x_1,x_2,\ldots,x_n)+c\\
f_i(x_1+c,x_2,\ldots,x_n)&=f_i(x_1,x_2, \ldots,x_n)\text{ for each }i\in \{2,\ldots,n\}.\end{align*}
This implies that $f_1-x_1,f_2,\ldots,f_n$ all belong to the set\[\Delta=\{q\in R[x_1,\ldots,x_n] \mid q(x_1+c,x_2,\ldots,x_n)\text{ for each }c\in R\}.\]
Lemma~\ref{lem:AdditiveSet} implies that $\Delta=R[x_2,\ldots,x_n]$. So we have found that \[f_1-x_1,f_2,\ldots,f_n\in R[x_2,\ldots,x_n].\]
In other words, these polynomials do not depend on $x_1$.

For each $i\in \{2,\ldots,n\}$, we  use the fact that $f$ commutes with \[\tau=(x_1,\ldots,x_{i-1},x_i+c,x_{i+1},\ldots,x_n)\] for each $c\in R$ and obtain similarly that 
\[f_1,\ldots,f_{i-1},f_i-x_i,f_{i+1},\ldots,f_n\in R[x_1,\ldots,x_{i-1},x_{i+1},\ldots,x_n],\]
i.e.~these polynomials do not depend on $x_i$.

Hence, for each $j\in \{1,\ldots,n\}$, we have proven that $f_j$ does not depend on the other variables, so $f_j\in R[x_j]$, and moreover that $f_j-x_j$ does not depend on $x_j$, so $f_j-x_j\in R$. This implies that $f$ is a translation.
\end{proof}
\begin{example}
Let $\k$ be a finite field with $q$ elements. The automorphism
\[f=(x_1+x_2-x_2^q,x_2)\in \SAut_{\k}(\A^2)\]
commutes with all translations. The hypothesis that $R$ is infinite is then necessary in Lemma~\ref{Lem:CentraliserTranslations}. Note that $f$ acts trivially on the $q^2$ elements of the set $\A^2(\k)$ of $\k$-rational points of $\A^2$.
\end{example}
\subsection{Bijections  on finite sets of points}
In \cite{MaubachFinite}, the action of $\Aut_\k(\A^n)$ on the set $\A^n(\k)$ has been studied by Stefan Maubach, for any finite field $\k=\F_q$ with $q$ elements. The main result consists of showing that all permutations are obtained if $q=2$ or $q$ is odd, and that if $q\ge 4$ is even, the image of the group of tame automorphisms $\Tame_\k(\A^n)$ in the group of permutations of $\A^n(\k)$ is the alternating group. 

The result of Maubach gives an interesting way to search for non-tame automorphisms over finite fields. The existence of an element of $\Aut_\k(\A^n)$ acting by an odd permutation on $\A^n(\k)$, where $k=\F_{2^r}$ with $r\ge 2$ would yield such an element.

We now consider the action of $\SAut_\k(\A^n)$ and in particular of the translations. 
\begin{lemma}\label{EvenAction}
Let $\k=\F_q$ be a finite field with $q$ elements, let $n\ge 1$ be an integer and let us consider the action of the group
\[\SAut_\k(\A^n)\cap \Tame_\k(\A^n)\]
of tame automorphisms of Jacobian $1$ on the set $\A^n(\k)\simeq \k^n$. Then, the following hold:
\begin{enumerate}
\item\label{TranslationsAreEven}
The action of every translation is  even, i.e.~contained in the alternating group $\mathrm{Alt}(\k^n)$, if and only $(q,n)\not=(2,1)$.
\item\label{sTameEven}
The action of every element of $\SAut_\k(\A^n)\cap \Tame_\k(\A^n)$ is  even if and only if $q>2$.
\end{enumerate}
\end{lemma}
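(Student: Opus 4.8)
The strategy is to reduce everything to counting the parity of explicit generators acting on the finite set $\k^n$. Since $q = p^r$ for a prime $p$, the symmetric group $\mathrm{Sym}(\k^n)$ has order $(q^n)!$, and a permutation is even iff it has no sign; I will compute signs by decomposing each generator into cycles according to its orbit structure, using the standard fact that a permutation which is a product of $m$ disjoint cycles on a set of size $N$ has sign $(-1)^{N-m}$.

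For part~\eqref{TranslationsAreEven}: a translation $\tau=(x_1+c_1,\dots,x_n+c_n)$ acts on $\k^n$ by $v\mapsto v+c$ where $c=(c_1,\dots,c_n)$. If $c=0$ it is the identity (even), so assume $c\neq 0$. The orbits of $v\mapsto v+c$ are the cosets of the cyclic subgroup $\langle c\rangle\le(\k^n,+)$, which has order $p$ (the additive order of any nonzero element in characteristic $p$). Hence $\tau$ is a product of $q^n/p$ disjoint $p$-cycles, and its sign is $\big((-1)^{p-1}\big)^{q^n/p}$. If $p$ is odd, each $p$-cycle is even, so $\tau$ is even; note $p$ odd forces $(q,n)\neq(2,1)$. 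If $p=2$, each $2$-cycle is odd and there are $q^n/2=2^{rn-1}$ of them, so $\tau$ is even iff $rn-1\ge 1$, i.e.\ iff $rn\ge 2$, i.e.\ iff $(q,n)\neq(2,1)$. This proves~\eqref{TranslationsAreEven}.

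For part~\eqref{sTameEven}: if $q>2$, I must show every element of $\SAut_\k(\A^n)\cap\Tame_\k(\A^n)$ acts evenly. By Maubach's theorem quoted above, the full tame group $\Tame_\k(\A^n)$ acts by all permutations when $q$ is odd and by the alternating group when $q\ge 4$ is even; in the latter case we are already done, and in the odd case it suffices to find, for each tame $g$ of Jacobian $1$ acting oddly, a contradiction — but in fact I would argue more directly by a generation argument. The group $\SAut_\k(\A^n)\cap\Tame_\k(\A^n)$ is generated by $\SL_n(\k)$ together with the elementary (triangular-unipotent) maps $x_i\mapsto x_i+P(x_1,\dots,\widehat{x_i},\dots,x_n)$ of Jacobian $1$; each such elementary map permutes $\k^n$ by fixing all coordinates but the $i$-th and translating the $i$-th fiberwise, so on each fiber (a copy of $\k$) it acts by $x_i\mapsto x_i+a$ with $a\in\k$ fixed, whose parity is handled by part~\eqref{TranslationsAreEven} applied in dimension $1$: since $q>2$, i.e.\ $(q,1)\neq(2,1)$, each such fiberwise translation is even, hence so is the product over the $q^{n-1}$ fibers. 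For $\SL_n(\k)$, since $q>2$ the group $\SL_n(\k)$ is perfect (equal to its own commutator subgroup), so its image in $\mathrm{Sym}(\k^n)$ lies in the commutator subgroup, which is contained in $\mathrm{Alt}(\k^n)$; thus every element of $\SL_n(\k)$ acts evenly. Therefore all generators act evenly, so the whole group does. Conversely, if $q=2$ then $\SL_n(\F_2)\subseteq\SAut_{\F_2}(\A^n)\cap\Tame_{\F_2}(\A^n)$, and for $n\ge 2$ the permutation matrix swapping $x_1$ and $x_2$ (which lies in $\GL_n(\F_2)=\SL_n(\F_2)$) acts on $\F_2^n$ as a product of transpositions $(v,v')$ over the $2^{n-2}$ orbits of size $2$ with $v_1\neq v_2$, giving sign $(-1)^{2^{n-2}}$, which is $-1$ exactly when $n=2$; for general $n$ one instead exhibits an elementary map, e.g.\ $x_1\mapsto x_1+x_2\cdots x_n$ viewed fiberwise over the hyperplane $x_2=\cdots=x_n=1$, realizing an odd permutation by combining with part~\eqref{TranslationsAreEven}. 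This shows the "only if" direction.

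The main obstacle I anticipate is the converse ("only if") part of~\eqref{sTameEven}: exhibiting a concrete Jacobian-$1$ tame automorphism acting by an odd permutation on $\F_{2^r}^n$ for the relevant $(q,n)$, and being careful that the element produced genuinely has Jacobian $1$ and is not already forced to be even by the cycle count — the fiberwise reduction to the one-dimensional case $(q,n)=(2,1)$ is the key device, and checking that exactly one fiber contributes an odd factor (so the total sign is $-1$) requires a short but careful orbit-counting argument. The forward direction is routine once one invokes perfectness of $\SL_n(\k)$ for $q>2$ and the dimension-one case of~\eqref{TranslationsAreEven}.
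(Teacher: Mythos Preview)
Your argument for part~\ref{TranslationsAreEven} is essentially identical to the paper's. For part~\ref{sTameEven}, your overall strategy---reduce to generators $\SL_n(\k)$ and elementary automorphisms $e_s$---matches the paper, and your fiberwise reduction for $e_s$ (on each fiber over $(x_2,\ldots,x_n)$ the map is a one-dimensional translation, even by part~\ref{TranslationsAreEven} since $(q,1)\neq(2,1)$) is a clean alternative to the paper's direct count. The paper instead argues, after reducing to $p=2$, that the fixed-point set of $e_s$ is $\A^1\times\{s=0\}$, so its cardinality is divisible by $q\ge 4$; since $4$ also divides $q^n$, the number of $2$-cycles is even. Your fibering argument is arguably slicker and reuses part~\ref{TranslationsAreEven} nicely.

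There is, however, one genuine slip: you claim that $\SL_n(\k)$ is perfect whenever $q>2$, but $\SL_2(\F_3)$ is not perfect---its abelianisation is $\Z/3\Z$. The conclusion you want still holds, since any homomorphism from $\SL_2(\F_3)$ to $\{\pm1\}$ must be trivial (the abelianisation has odd order), but as stated your argument has a gap. The cleanest repair is exactly what the paper does: observe that $\SL_n(\k)$ is generated by elementary matrices, each of which is conjugate to some $e_s$ with $s$ linear, so the $\SL_n$ case reduces entirely to the $e_s$ case already handled and no perfectness appeal is needed.

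For the converse at $q=2$, after a detour through the transposition matrix (which, as you note, only yields an odd permutation when $n=2$) you land on the paper's example $(x_1+x_2\cdots x_n,\,x_2,\ldots,x_n)$. This swaps precisely the two points $(0,1,\ldots,1)$ and $(1,1,\ldots,1)$ and fixes everything else, hence is a single transposition; with the empty-product convention it also covers $n=1$. You should just lead with this example and drop the transposition-matrix discussion.
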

\begin{proof}
We write $q=p^r$ where $p=\mathrm{char}(\k)$ is  a prime number and $r\ge 1$.

\ref{TranslationsAreEven}: A non-trivial translation is equal to $(x_1+v_1,\ldots,x_n+v_n)$ for some $(v_1,\ldots,v_n)\in \k^n\setminus \{0\}$. The order of such an element is then equal to $p$. If $p$ is odd, the induced permutation is then even. We may thus assume that $p=2$. In that case, the orbits have all size $2$ and the number of orbits is then $\frac{\lvert \k\rvert^n}{2}=\frac{(2^r)^n}{2}=2^{rn-1}$. We then obtain an even permutation, except when $rn=1$. This latter case corresponds to $(q,n)=(2,1)$ and to the translation $(x_1+1)$ of $\A^1_{\F_2}$.

\ref{sTameEven}: If $q=2$, we consider $(x_1+x_2x_3\cdots x_n,x_2,\ldots,x_n)$ (which corresponds to $(x_1+1)$ if $n=1$), that is a tame automorphism of Jacobian $1$. It is of order $2$ and permutes exactly two points, namely $(1,1,\ldots,1)$ with $(0,1,\ldots,1)$ (this corresponds to $(0)$ and $(1)$ if $n=1$). It is thus an odd permutation.

We now assume that $q>2$ and prove that the action of every element of $\SAut_\k(\A^n)\cap \Tame_\k(\A^n)$ is always even. This group is generated by $\SL_n(\k)$ and by elements of the form \[e_s=(x_1+s(x_2,\ldots,x_n),x_2,\ldots,x_n),\] where $s\in \k[x_2,\ldots,x_n]$. Moreover, $\SL_n(\k)$ is generated by elementary matrices conjugate to $e_s$ with $s$ linear. It then suffices to show that the action of every $e_s$ is even. As before, we may assume that $p=2$, as the order of $e_s$ is $p$. Every orbit has then order $2$ and it remains to see that the number of orbits is even, or equivalently that the number of points moved is divisible by $4$. As $4$ divides $q$, it divides $\lvert \A^n(\k)\rvert$. We then only need to see that the number of fixed points is divisible by $4$. The set of fixed points being $\A^1\times X$ where $X$ is the zero locus of $s(x_2,\ldots,x_n)=0$, this is again given by the fact that $4$ divides $q=\lvert \A^1(\k)\rvert$.
\end{proof}
\begin{remark}
Lemma~\ref{EvenAction} could yield to a way to look for non-tame automorphisms. For each finite field $\k=\F_q$ with $q>2$, the existence of an element of $\SAut_{\k}(\A^n)$ whose action on $\A^2(\k)$ is odd would yield a non-tame automorphism. 
\end{remark}
\subsection{Normal subgroups containing affine automorphisms}\label{NormalTransLin}
In this section, we prove that the normal subgroups generated by all translations or by linear elements contains all tame automorphisms. The result has already be proven if $n=2$ and $\k=\mathbb{C}$ in \cite[Lemma~30]{FurterLamy}, or in \cite{Lewis} for infinite fields. We recall some of the arguments of \cite{Lewis} for self-containedness and give small complements.

The following proposition corresponds to \cite[Theorem 6]{Lewis} in the case where $\lvert \k\rvert\not=2$; the proof for $\mathrm{char}(\k)\not=2$ is similar, the proof for $\mathrm{char}(\k)=2$ is a bit shorter here. The case where $\lvert \k\rvert=2$ answers the question asked in \cite[Remark 1]{Lewis}.
\begin{proposition}\label{TranslationSLn}
Let $\k$ be a field and let $n\ge 1$ be an integer. 
The following conditions are equivalent:
\begin{enumerate}
\item\label{TranslationsSLn1}
There exists a normal subgroup $N\subseteq\SAut_\k(\A^n)$
that contains all translations but does not contain $\SL_n(\k)$.
\item\label{TranslationsSLn2}
$\lvert \k\rvert= 2$ and $n=2$.
\end{enumerate}
\end{proposition}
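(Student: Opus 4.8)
The plan is to prove the two implications separately, with the bulk of the work lying in showing $(\ref{TranslationsSLn2})\Rightarrow(\ref{TranslationsSLn1})$, i.e.\ constructing, for $\k=\F_2$ and $n=2$, a normal subgroup containing all translations but not all of $\SL_2(\F_2)$. For the easy direction $(\ref{TranslationsSLn1})\Rightarrow(\ref{TranslationsSLn2})$, I would argue by contraposition: assuming either $\lvert\k\rvert>2$ or $n\ge 3$ (or $n=1$, which is degenerate since $\SL_1=\{\mathrm{id}\}$ and the statement is vacuous/trivial there), I show that any normal subgroup $N$ containing all translations must contain $\SL_n(\k)$. This is essentially part~(1) of Proposition~\ref{PropLewis}, so strictly speaking I may invoke it directly; but since the point here is to pin down \emph{exactly} when the implication fails, I would recall the relevant computation: conjugating a translation $\tau=(x_1+1,x_2,\dots,x_n)$ by a triangular map of the form $(x_1+s(x_2,\dots,x_n),x_2,\dots,x_n)$ and taking the appropriate commutator produces the elementary shear $(x_1+s(x_2,\dots,x_n),x_2,\dots,x_n)$ with $s$ essentially arbitrary; specializing $s$ to be linear gives all elementary matrices, which generate $\SL_n(\k)$ — and the place where this breaks is precisely when the ring of scalars is too small to realize enough linear $s$, namely $\lvert\k\rvert=2$ together with $n=2$ (for $n\ge 3$ one has the extra variables to play with even over $\F_2$, and for $\lvert\k\rvert>2$ one has enough scalars).

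For the hard direction, the key is to produce a group homomorphism from $\SAut_{\F_2}(\A^2)$ whose kernel contains all translations but not all of $\SL_2(\F_2)=\langle(x_2,x_1)\rangle\cong\Z/2\Z$ — equivalently, a homomorphism on which the single nontrivial element $\sigma=(x_2,x_1)$ is not killed but all translations are. This is exactly the content flagged by Proposition~\ref{PropFiniteA2}: the last displayed assertion there gives a surjective homomorphism $\SAut_{\F_2}(\A^2)\twoheadrightarrow(\F_2[x],+)$ whose kernel contains all translations but does not contain $\SL_2(\F_2)$. So the cleanest route is to take $N$ to be the kernel of that homomorphism: it is normal by construction, it contains $\Tra_2(\F_2)$, and it does not contain $\SL_2(\F_2)$. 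Thus the proof of $(\ref{TranslationsSLn2})\Rightarrow(\ref{TranslationsSLn1})$ reduces to citing the construction behind Proposition~\ref{PropFiniteA2}, which in turn rests on the amalgamated product structure $\Aut_{\F_2}(\A^2)=\Aff_2(\F_2)\ast_{B}\Tri_2(\F_2)$ (Jung--Van der Kulk) and a careful definition of the homomorphism on each factor that agrees on the amalgamated subgroup $B$.

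Concretely, inside the proof I would recall how the target homomorphism is built: on the triangular factor one sends $(ax_1+b(x_2),\,cx_2+d)$ (with $ac=1$, so $a=c=1$ over $\F_2$) to the ``non-affine part'' of $b$, roughly $b(x)-b(0)-b'(0)x\in\F_2[x]$ or a suitable normalization thereof, landing in $(\F_2[x],+)$; on the affine factor $\Aff_2(\F_2)$ one sends everything to $0$. The verification then has two parts: (i) these two prescriptions agree on the intersection $B=\Aff_2(\F_2)\cap\Tri_2(\F_2)$, so by the universal property of the amalgamated product they glue to a homomorphism on $\Aut_{\F_2}(\A^2)$, which we then restrict to $\SAut_{\F_2}(\A^2)$; (ii) the map is nontrivial (pick a triangular automorphism with a genuinely quadratic-or-higher $b$) yet kills all of $\Tra_2(\F_2)\subseteq\Aff_2(\F_2)$ and kills $\sigma=(x_2,x_1)\in\Aff_2(\F_2)$ — wait, no: we want $\sigma\notin N$, so the construction must be arranged so that $\sigma$ is \emph{not} in the affine factor's kernel, which is why over $\F_2$ one uses the more delicate second homomorphism of Proposition~\ref{PropFiniteA2} rather than the first. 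The main obstacle, and the reason the $\lvert\k\rvert=2$ case is genuinely special, is exactly this: one must track how $\sigma$ interacts with the amalgamation and exhibit a homomorphism that treats the transposition nontrivially while still killing translations — over larger fields no such homomorphism can exist because $\SL_n(\k)$ is then perfect / is forced into the kernel by the commutator computation of the first paragraph, whereas over $\F_2$ with $n=2$ the group $\SL_2(\F_2)\cong S_3$ has the nontrivial abelian quotient structure that lets a piece of it escape. I would conclude by assembling: $(\ref{TranslationsSLn2})\Rightarrow(\ref{TranslationsSLn1})$ via this kernel, $(\ref{TranslationsSLn1})\Rightarrow(\ref{TranslationsSLn2})$ via the contrapositive commutator argument, completing the equivalence.
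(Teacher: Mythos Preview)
Your $(2)\Rightarrow(1)$ direction would work if you simply cite Proposition~\ref{PropFiniteA2} (that construction does not depend on the present proposition, so there is no circularity), but note that the paper takes a far more direct route: it considers the action of $\SAut_{\F_2}(\A^2)$ on the four points of $\A^2(\F_2)$ and lets $N$ be the subgroup of automorphisms acting by an even permutation. Every nontrivial translation is a product of two disjoint transpositions, hence even, while $(x_1+x_2,x_2)\in\SL_2(\F_2)$ is a single transposition, hence odd. This avoids the amalgamated-product machinery entirely. Your own sketch of that machinery contains a slip --- you write $\SL_2(\F_2)=\langle(x_2,x_1)\rangle\cong\Z/2\Z$, which is false (as you yourself later note, $\SL_2(\F_2)\cong S_3$) --- and your first attempt, sending the whole affine factor to $0$, visibly kills $\sigma$, which is why you then retreat to ``the more delicate second homomorphism'' without specifying it.

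For $(1)\Rightarrow(2)$ there is a genuine gap. First, invoking Proposition~\ref{PropLewis}(1) is circular: in the paper that proposition is \emph{deduced from} the present one. Second, your description of the commutator is not accurate: the commutator of $e_q=(x_1+q,x_2,\dots,x_n)$ with the translation $\tau_\epsilon=(x_1,x_2+\epsilon_2,\dots,x_n+\epsilon_n)$ yields
\[
(x_1+q(x_2+\epsilon_2,\dots,x_n+\epsilon_n)-q(x_2,\dots,x_n),\,x_2,\dots,x_n),
\]
i.e.\ a \emph{discrete derivative} of $q$, not $q$ itself. One must then realise $\lambda x_2$ as such a discrete derivative. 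For $\mathrm{char}(\k)\neq 2$ one takes $q=\tfrac{\lambda}{2}x_2^2$; for $n\ge 3$ one takes $q=x_2x_3$ with $\epsilon_3=\lambda$. The delicate case your sketch does not address is $n=2$, $\mathrm{char}(\k)=2$, $|\k|>2$: here the discrete derivative of any quadratic $q$ is constant, and the paper has to pass to cubic polynomials $q=\theta\nu x_2^3$, take two different choices of parameters, and combine the results to cancel the unwanted $x_2^2$ term --- the final step requires solving $\theta\mu\nu(\theta+\mu)=\lambda$ with $\theta\neq\mu$ in $\k^*$, which is exactly where $|\k|\ge 3$ is used. Without this computation your contrapositive does not actually locate the boundary at $|\k|=2$.
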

\begin{proof}
$\ref{TranslationsSLn2}\Rightarrow\ref{TranslationsSLn1}$: Suppose  that $n=2$ and $\k=\F_2$. The group $\SAut_\k(\A^2)$ then acts on the four points of $\A^2(\k)$, and the action of every translation is even (lemma~\ref{EvenAction}\ref{TranslationsAreEven}). However, the element $(x_1+x_2,x_2)$ permutes the two points $(0,1)$ and $(1,1)$ and fixes the points $(0,0)$ and $(1,0)$. It is then an odd permutation. The normal  subgroup $N\subseteq \SAut_\k(\A^2)$ consisting of automorphisms inducing an even permutation on the four points then contains all translations but does not contain $\SL_2(\F_2)$.

$\ref{TranslationsSLn1}\Rightarrow\ref{TranslationsSLn2}$: By contraposition, we assume that $n\not=2$ or that $\lvert \k\rvert\not=2$ and prove that every normal subgroup $N\subseteq \SAut_\k(\A^n)$ that contains all translations also contains $\SL_n(\k)$. If $n=1$, then $\SAut_\k(\A^n)$ only consists of translations, so the result holds. We may then assume $n\ge 2$.

For each $\epsilon=(\epsilon_2,\ldots,\epsilon_n)\in \k^{n-1}$, the translation \[\tau_\epsilon=(x_1,x_2+\epsilon_2,\ldots,x_n+\epsilon_n)\] belongs to $N$. Moreover, for each polynomial $q\in \k[x_2,\ldots,x_n]$, the element 
\[e_q=(x_1+q(x_2,\ldots,q_n),x_2,\ldots,x_n)\]
belongs to $\SAut_\k(\A^n)$. Hence, the commutator $h_{q,\epsilon}=(e_q^{-1}\circ \tau_\epsilon^{-1}\circ e_q)\circ \tau_\epsilon$  belongs to $N$. We write explicitely $h_{q,\epsilon}$ as
\[h_{q,\epsilon}=(x_1+q(x_2+\epsilon_2,\ldots,x_n+\epsilon_n)-q(x_2,\ldots,x_n),x_2,\ldots,x_n).\]

To prove that $\SL_n(\k)$ is contained in $N$, we only need to show that $e_{\lambda x_2}=(x_1+\lambda x_2,x_2,\ldots,x_n)$ belongs to $N$ for each $\lambda\in \k$. Indeed, every elementary matrix of $\SL_n(\k)$ with diagonal entries equal to $1$ and only one other non-zero entry is conjugated to some $e_{\lambda x_2}$ by permutations, and all of them generate $\SL_n(\k)$.

If $\mathrm{char}(\k)\not=2$, we choose $q=\frac{\lambda}{2}x_2^2$, $\epsilon_2=1$ and obtain $h_{q,\epsilon}=(x_1+\lambda x_2+\frac{\lambda}{2},x_2,\ldots,x_n)\in N$. Composing with the translation $(x_1-\frac{\lambda}{2},x_2,\ldots,x_n)$, we obtain $e_{\lambda x_2}\in N$ as desired.

If $n\ge 3$, we choose $q=x_2x_3$ and $(\epsilon_2,\epsilon_3)=(0,\lambda)$ and obtain $e_{\lambda x_2}=h_{q,\epsilon}\in N$ as desired.

The remaining case is when $n=2$ and  $\mathrm{char}(\k)=2$. By assumption, we have $\lvert \k\rvert\ge 3$. We choose $\theta,\mu,\nu\in \k$ and choose two possibilities for $(q,\epsilon_2)$, namely $(\theta \nu x_2^3,\mu)$ and $(\mu \nu x_2^3,\theta)$. This gives two elements for $h_{q,\epsilon}\in N$, namely
\begin{align*}
(x_1+\theta\mu\nu x_2^2+\theta\mu^2\nu x_2+\theta\mu^3\nu,x_2)\in N,\\
(x_1+\theta\mu\nu x_2^2+\theta^2\mu\nu x_2+\theta^3\mu\nu,x_2)\in N.
\end{align*}
The composition of these two elements with  the translation $(x_1+\theta\mu\nu(\theta^2+\mu^2),x_2,)$ gives
\[(x_1+\theta\mu\nu(\theta+\mu) x_2,x_2)\in N.\]
As $\lvert \k\rvert\ge 3$, we may choose $\theta,\mu\in \k^*$, both different and then $\nu=\frac{\lambda}{\theta\mu(\theta+\mu)}$ to obtain $e_{\lambda x_2}\in N$ as desired.
\end{proof}

The following result is due to Drew Lewis, we recall the proof, as it is short, for self-containedess. 

\begin{proposition}  \cite[Theorem 9]{Lewis}\label{SLnTame}
Let $\k$ be a infinite field and let $n\ge 2$ be an integer. Every normal subgroup
\[N\subseteq\SAut_\k(\A^n)\]
that contains $\SL_n(\k)$ also contains all tame automorphisms of Jacobian~$1$.
\end{proposition}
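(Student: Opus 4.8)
The plan is to reduce the statement to showing that $N$ contains every \emph{elementary shear}
\[
e_s \;:=\; (x_1 + s(x_2,\dots,x_n),\, x_2,\dots,x_n), \qquad s \in \k[x_2,\dots,x_n],
\]
because these together with the matrices of $\SL_n(\k)$ generate $\SAut_\k(\A^n)\cap\Tame_\k(\A^n)$: one writes a triangular automorphism as a diagonal matrix composed with shears, the shears along the other coordinates being conjugates of the $e_s$ by permutation matrices, and pulling the linear factors of a Jacobian-one tame word to one side forces their product into $\SL_n(\k)$.

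First I would prove $\Tra_n(\k)\subseteq N$. The linear shear $e_{\lambda x_2}$ lies in $\SL_n(\k)\subseteq N$, so, $N$ being normal in $\SAut_\k(\A^n)$, conjugating it by the translation $(x_1,x_2+1,x_3,\dots,x_n)$ gives $e_{\lambda x_2-\lambda}\in N$; composing with $e_{-\lambda x_2}\in N$ leaves the translation $(x_1-\lambda,x_2,\dots,x_n)$ in $N$, and conjugating by permutation matrices of $\SL_n(\k)$ yields all of $\Tra_n(\k)$. Now set $\mathcal{S}:=\{\,s\in\k[x_2,\dots,x_n] : e_s\in N\,\}$, an additive subgroup of $(\k[x_2,\dots,x_n],+)$ since $e_s\circ e_{s'}=e_{s+s'}$. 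Repeating the commutator computation carried out for $h_{q,\epsilon}$ in the proof of Proposition~\ref{TranslationSLn}: for each $q\in\k[x_2,\dots,x_n]$ and each $\bar\epsilon=(\epsilon_2,\dots,\epsilon_n)\in\k^{n-1}$ the translation $\tau_{\bar\epsilon}=(x_1,x_2+\epsilon_2,\dots,x_n+\epsilon_n)$ now belongs to $N$, hence so does the commutator $e_q^{-1}\tau_{\bar\epsilon}^{-1}e_q\tau_{\bar\epsilon}=e_{\,q(\bar x+\bar\epsilon)-q(\bar x)}$. Thus $\mathcal{S}$ contains every finite-difference polynomial $q(\bar x+\bar\epsilon)-q(\bar x)$.

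It then remains to prove $\mathcal{S}=\k[x_2,\dots,x_n]$. I would show, by induction on $d$, that $\lambda\,m\in\mathcal{S}$ for every $\lambda\in\k$ and every monomial $m$ of total degree $d$; for $d=0$ this holds because the corresponding maps are translations. Given $m=x_2^{a_2}\cdots x_n^{a_n}$ of degree $d\ge 1$, choose $i$ with $a_i\ge 1$ and apply the previous paragraph to $q=x_i\,m$ and $\bar\epsilon=\epsilon\,e_i$: a direct expansion gives, for every $\epsilon\in\k$,
\[
(a_i+1)\,\epsilon\,m\;+\;r_\epsilon\;\in\;\mathcal{S},
\]
where $r_\epsilon$ is a $\k$-linear combination of monomials of degree $<d$, hence lies in $\mathcal{S}$ by the induction hypothesis; therefore $(a_i+1)\,\epsilon\,m\in\mathcal{S}$ for all $\epsilon\in\k$. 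If $a_i+1$ is invertible in $\k$ — in particular whenever $\car(\k)=0$ — then letting $\epsilon$ run over $\k$ gives $\lambda\,m\in\mathcal{S}$ for all $\lambda\in\k$, closing the induction. So the proposition is proved when $\car(\k)=0$.

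The main obstacle is the positive-characteristic case. When $\car(\k)=p$ divides $a_i+1$ for \emph{every} exponent $a_i\ge 1$ of the target monomial $m$, the coefficient of $m$ above vanishes and the plain difference operator is useless; instead one applies the construction to a suitably chosen higher power $x_i^{\,c}\,(m/x_i^{a_i})$, selecting $c$ by means of Lucas's theorem on the base-$p$ digits of $c$ and of $a_i$ so that $\binom{c}{a_i}\not\equiv 0\pmod p$ while every $j\ne a_i$ with $\binom{c}{j}\not\equiv 0\pmod p$ produces a monomial that has already been dealt with. Two features make this delicate: the auxiliary monomials so produced may have \emph{larger} degree than $m$, so the argument can no longer be a plain induction on degree; and recovering all scalar multiples $\lambda\,m$ — not merely the $\binom{c}{a_i}$-multiple — requires combining the freedom in $\epsilon$ and in $c$ with the dilation- and Frobenius-twisted conjugations of the $e_s$ by Jacobian-one linear maps. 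Organising the monomials into a well-founded order that makes all of this consistent — separating the ``good'' monomials (some $a_i\not\equiv-1\pmod p$) from the ``bad'' ones, and handling $p=2$ with particular care — is the combinatorial heart of the argument and the step I expect to be the real difficulty. This is exactly the bookkeeping carried out in \cite{Lewis}, which the section reproduces.
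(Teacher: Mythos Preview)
Your reduction to the elementary shears $e_s$ and the additivity $e_s\circ e_{s'}=e_{s+s'}$ match the paper, but from that point on the paper takes a much shorter route that avoids every difficulty you describe in positive characteristic. Instead of commutators with \emph{translations}, the paper uses commutators with \emph{diagonal matrices} in $\SL_n(\k)$, which are already in $N$ by hypothesis. For $\alpha=(\alpha_2,\dots,\alpha_n)\in(\k^*)^{n-1}$ set
\[
\delta=\bigl((\alpha_2\cdots\alpha_n)^{-1}x_1,\ \alpha_2x_2,\ \dots,\ \alpha_nx_n\bigr)\in\SL_n(\k)\subseteq N;
\]
then for $q=\xi\,x_2^{i_2}\cdots x_n^{i_n}$ the commutator $e_q^{-1}\delta^{-1}e_q\,\delta$ equals
\[
\bigl(x_1+(\alpha_2^{\,i_2+1}\cdots\alpha_n^{\,i_n+1}-1)\,\xi\,x_2^{i_2}\cdots x_n^{i_n},\,x_2,\dots,x_n\bigr)\in N.
\]
Since $\k$ is infinite one can always choose the $\alpha_j$ so that $\alpha_2^{\,i_2+1}\cdots\alpha_n^{\,i_n+1}\neq 1$, and then adjust $\xi$ to hit any prescribed scalar. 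This single step handles all monomials uniformly, with no induction and no case distinction on the characteristic.

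So your translation-based argument is correct in characteristic zero, but the positive-characteristic completion you outline is both unfinished (you yourself flag it as ``the real difficulty'' and defer to \cite{Lewis}) and unnecessary: the proof the paper reproduces from Lewis is precisely the diagonal-matrix trick above, not the Lucas-theorem combinatorics you anticipate. The moral is that once $\SL_n(\k)$ is available inside $N$, dilations are far more efficient than finite differences for isolating a single monomial.
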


\begin{proof}
As the tame group is generated by $\SL_n(\k)$ and elementary automorphisms, it suffices to show that 
\[ e_s=(x_1+s(x_2,\ldots,x_n),x_2,\ldots,x_n)\in N\]
for each $s\in \k[x_2,\ldots,x_n]$. As $e_{s}\circ e_{s'}=e_{s+s'}$, we only need to obtain each monomial. As in the proof of  Proposition~\ref{TranslationSLn}, we use some commutators.

For each $\alpha=(\alpha_2,\ldots,\alpha_n)\in (\k^*)^{n-1}$, the diagonal matrix \[\delta=({(\alpha_2\cdots \alpha_n)^{-1}} x_1,\alpha_2 x_2,\ldots,\alpha_n x_n)\in \SL_n(\k)\] belongs to $N$. Moreover, for each polynomial $q\in \k[x_2,\ldots,x_n]$, the element 
\[e_q=(x_1+q(x_2,\ldots,q_n),x_2,\ldots,x_n)\]
belongs to $\SAut_\k(\A^n)$. Hence, the commutator $u_{q,\alpha}=(e_q^{-1}\circ \delta^{-1}\circ e_q)\circ \delta$  belongs to $N$. We write explicitely $u_{q,\alpha}$ as
\[u_{q,\alpha}=(x_1+\alpha_2\cdots \alpha_nq(\alpha_2 x_2,\ldots,\alpha_n x_n)-q(x_2,\ldots,x_n),x_2,\ldots,x_n).\]
For all integers $i_2,\ldots,i_n\ge 0$ we may choose $q(x_2,\ldots,x_n)=\xi x_2^{i_2}\cdots x_n^{i_n}$ with $\xi\in \k^*$ and obtain 
\[u_{q,\alpha}=(x_1+(\alpha_2^{i_2+1}\cdots \alpha_n^{i_n+1}-1)\xi x_2^{i_2}\cdots x_n^{i_n},x_2,\ldots,x_n)\in N.\]
As $\k$ is infinite, we may choose $\alpha_2,\ldots,\alpha_n\in \k^*$ such that  $\alpha_2^{i_2+1}\cdots \alpha_n^{i_n+1}\not =1$ 
and choose $\xi$ to obtain
\[(x_1+\lambda x_2^{i_2}\cdots x_n^{i_n},x_2,\ldots,x_n)\in N\]
for each $\lambda\in \k$. This happens for each monomial, so we obtain all tame automorphisms.
\end{proof}

Proposition~\ref{PropLewis} now follows from Propositions~\ref{TranslationSLn} and~\ref{SLnTame}.

We now prove that the group $N$ of Proposition~\ref{PropLewis} not only contains all tame automorphisms but also contains the Nagata automorphism. The calculation comes from \cite{MaubachPoloni} and may be actually generalised to many non-tame automorphisms (see \cite[Theorem 2.6]{MaubachPoloni}). It was also included in \cite[Observation 4]{Zygadlo} and in \cite[Proposition 15.11.5]{FurterKraft}.
\begin{example}\label{NagataAuto}All calculations given below are inspired from \cite[Theorem 3.3]{MaubachPoloni}, but one does not need to read the above article to follow the calculations.
 Let $\k$ be a field and let us write $\Delta=x_1x_3+x_2^2$. For each $\alpha\in \k$, we define
\[N_\alpha=(x_1-2\alpha x_2\Delta-\alpha^2 x_3\Delta^2,x_2+x_3\alpha \Delta,x_3)\in \End_\k(\A^3)\]
One then calculate $(N_\alpha)^*(\Delta)=\Delta$ and then  $N_\alpha\circ N_\beta=N_{\alpha+\beta}$ for all $\alpha,\beta\in \k$, which implies that $N_\alpha\in \Aut(\A^3)$ for each $\alpha\in \k$, with $(N_\alpha)^{-1}=N_{-\alpha}$. Moreover, $\Jac(N_\alpha)=1$ for each $\alpha\in \k^*$. The element $N_1\in \SAut_\k(\A^3)$ is the classical \emph{Nagata automorphism} of \cite[Section 2.1]{Nagata}.

For each $u\in \k^*$, we write $L_{u}=(ux_1,x_2,u^{-1}x_3)\in \SL_3(\k)$ We again have $(L_u)^*(\Delta)=\Delta$ and then obtain that
\[(L_u)^{-1}\circ N_\alpha \circ L_u=N_\frac{\alpha}{u}\]
for each $\alpha\in \k^*$. This implies that $N_\alpha\circ L_{u}\circ N_{-\frac{\alpha}{u}}=L_{u}$, and thus \[N_\alpha\circ (L_{u}\circ N_{\frac{\alpha(u-1)}{u}})\circ N_\alpha^{-1}=L_{u},\]
which implies that $L_{u}\circ N_{\frac{\alpha(u-1)}{u}}$ is conjugate to $L_{u}$ in $\SAut_{\k}(\A^3)$, so $N_{\frac{\alpha(u-1)}{u}}$ is in the normal subgroup generated by $\SL_3(\k)$. As soon as $\lvert \k\rvert>2$, we may choose $u\in \k\setminus\{0,1\}$ and choose $\alpha\in \k^*$ so that $\frac{\alpha(u-1)}{u}$ is any possible value in $\k^*$. Hence $N_\beta$ is in the normal subgroup generated by $\SL_n(\k)$ for each $\beta\in \k^*$. In particular, the Nagata automorphism is contained in any normal subgroup of $\SAut_\k(\A^3)$ that contains $\SL_3(\k)$.
\end{example}
\subsection{Quotients of $\SAut_\k(\A^2)$ when $\k$ is finite}\label{SecFiniteA2}
The proof of Proposition~\ref{SLnTame} (second part of Proposition~\ref{PropLewis}) really needs that the field is infinite. In this section, we prove that the results are actually false over any finite field. Indeed, we provide a surjective group homomorphism $\SAut_\k(\A^2)\to \k[x]$ whose kernel contains all translations, and also all linear maps if $\lvert \k\rvert>2$, for each finite field $\k$. This will give the proof of Proposition~\ref{PropFiniteA2} and show a very different behaviour from the case of infinite fields, where $\SAut_\k(\A^2)$ is a perfect group, and where the normal subgroup generated by linear maps or translations is everything (Proposition~\ref{PropLewis}). The construction uses the following natural vector space $V$.
\begin{lemma}\label{lemmaV}
Let $\k$ be a field and let $V\subseteq \k[x]$ be the $\k$-linear subspace generated by 
\[\{s-as(ax+b)\mid s\in\k[x], a\in\k^*, b\in\k\}.\]
Then, the following hold:
\begin{enumerate}
\item\label{kV}
$\k\subseteq V$;
\item\label{Vallfinite}
If $\k$ is infinite, then $V=\k[x]$.
\item\label{Vnotallfinite}
If $\k$ is finite with $q$ elements, the classes of $(x^{q-1}(x^q-x)^{m(q-1)-1})_{m\ge 1}$ in $\k[x]/V$ are linearly independent. In particular, $\k[x]/V$ has infinite dimension.
\item\label{kmore2Linear}
If $\lvert \k\rvert>2$, then $cx+d\in V$ for all $c,d\in \k$.
\item\label{k2V}
If $\k=\F_2$, then $V=\k[x(x+1)]$, so $x\not\in V$.
\end{enumerate}
\end{lemma}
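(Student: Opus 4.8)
Throughout I use that $V$ is spanned by the elements $s-a\,s(ax+b)$ with $s\in\k[x]$, $a\in\k^{*}$, $b\in\k$; for the assertions that $V$ is large I plug in convenient triples $(s,a,b)$, and for the assertions that $V$ is small I produce linear functionals on $\k[x]$ killing $V$. \ref{kV}: take $s=x$, $a=1$, and let $b$ range over $\k$; this gives $x-(x+b)=-b\in V$, so $\k\subseteq V$. \ref{Vallfinite}: take $s=x^{n}$, $b=0$; this gives $(1-a^{n+1})x^{n}\in V$ for every $a\in\k^{*}$, and since $\k$ is infinite some $a$ satisfies $a^{n+1}\ne1$, so $x^{n}\in V$; as $n$ is arbitrary and $V$ is a subspace, $V=\k[x]$. \ref{kmore2Linear}: constants lie in $V$ by \ref{kV}, so it suffices to get $x\in V$; if $\mathrm{char}(\k)\ne2$, use $s=x^{2}$, $a=1$, $b=1$ to get $x^{2}-(x+1)^{2}=-2x-1\in V$ and subtract the constant; if $\mathrm{char}(\k)=2$, then $\lvert\k\rvert>2$ furnishes $a\in\k^{*}\setminus\{1\}$, and $s=x$, $b=0$ gives $(1-a^{2})x=(1+a)^{2}x\in V$ with $(1+a)^{2}\ne0$.

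\ref{k2V}: over $\F_{2}$ the only admissible $a$ is $1$, so $V$ is the image of the linear difference operator $\Delta s:=s(x+1)-s(x)=s(x+1)+s(x)$, i.e. $V=\Delta(\F_{2}[x])$. Since $x+2=x$ and $2=0$ in characteristic $2$, one has $\Delta(\Delta s)=s(x+2)-2s(x+1)+s(x)=0$, so $V\subseteq\ker\Delta$; and $\ker\Delta$ is the fixed subring $\F_{2}[x]^{\langle x\mapsto x+1\rangle}$, which is the classical $\F_{2}[x^{2}+x]=\F_{2}[x(x+1)]$. For the reverse inclusion I show $(x^{2}+x)^{i}\in V$ by induction on $i$: the polynomial $\Delta(x^{2i+1})\in V$ has degree $2i$ and leading coefficient $\binom{2i+1}{2i}=1$, and it lies in $\ker\Delta=\F_{2}[x(x+1)]$, hence equals $(x^{2}+x)^{i}$ plus an $\F_{2}$-combination of the $(x^{2}+x)^{j}$ with $j<i$; the latter lie in $V$ by induction (base case $1=\Delta(x)$), so $(x^{2}+x)^{i}\in V$. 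Thus $V=\F_{2}[x(x+1)]$, and in particular $x\notin V$.

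The main work is \ref{Vnotallfinite}, where $\k=\F_{q}$. I will construct, for each $m\ge1$, a linear functional $\psi_{m}\colon\k[x]\to\k$ vanishing on $V$ with $\psi_{m}(f_{m'})=-\delta_{m,m'}$, where $f_{m'}:=x^{q-1}(x^{q}-x)^{m'(q-1)-1}$; applying the $\psi_{m}$ to a vanishing linear combination then gives linear independence of the classes $\overline{f_{m}}$ in $\k[x]/V$, hence infinite-dimensionality. The functionals are residues at infinity,
\[\psi_{m}(s):=\mathrm{Res}_{\infty}\Bigl(\frac{s(x)\,dx}{(x^{q}-x)^{m(q-1)}}\Bigr),\]
that is, $\psi_{m}(s)=-a_{-1}$ where $\tfrac{s(x)}{(x^{q}-x)^{m(q-1)}}=\sum_{j}a_{j}x^{j}$ is the Laurent expansion at $\infty$ (a Laurent series in $1/x$). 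That $\psi_{m}$ kills $V$ is exactly where the exponent $m(q-1)$ is used: for $a\in\F_{q}^{*}$, $b\in\F_{q}$ one has $(ax+b)^{q}-(ax+b)=a(x^{q}-x)$, so if $g$ denotes the affine automorphism $x\mapsto ax+b$ of $\PP^{1}$ (which fixes $\infty$), then $g^{*}\bigl(\tfrac{s\,dx}{(x^{q}-x)^{m(q-1)}}\bigr)=a^{\,1-m(q-1)}\tfrac{s(ax+b)\,dx}{(x^{q}-x)^{m(q-1)}}$; as the residue is invariant under pullback by an automorphism fixing the point, $\psi_{m}\bigl(a\,s(ax+b)\bigr)=a^{\,m(q-1)}\psi_{m}(s)=\psi_{m}(s)$ because $a^{q-1}=1$.

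It remains to evaluate $\psi_{m}(f_{m'})=\mathrm{Res}_{\infty}\bigl(x^{q-1}(x^{q}-x)^{d}\,dx\bigr)$ with $d=(m'-m)(q-1)-1$. If $m'>m$ then $d\ge q-2\ge0$, so $x^{q-1}(x^{q}-x)^{d}\,dx$ is a polynomial differential and its residue at $\infty$ vanishes; if $m'<m$ then $d\le -q$, and substituting $x=1/y$ rewrites the differential as $-y^{\,q(-d-1)-1}(1-y^{q-1})^{d}\,dy$, where the monomial exponent $q(-d-1)-1\ge1$ and $(1-y^{q-1})^{d}$ contributes only non-negative powers of $y$, so again there is no $y^{-1}$-term; and if $m'=m$ then $d=-1$ and the same substitution gives $-y^{-1}(1-y^{q-1})^{-1}\,dy$, whose $y^{-1}$-coefficient is $-1$. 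Hence $\psi_{m}(f_{m'})=-\delta_{m,m'}$. The genuine obstacle in the whole lemma is the conceptual step in \ref{Vnotallfinite}: the generators of $V$ carry the multiplicative twist $s\mapsto a\,s(ax+b)$, and it is precisely a power of $x^{q}-x$ with exponent divisible by $q-1$ that absorbs this twist, since $g^{*}(x^{q}-x)=a(x^{q}-x)$ and $a^{q-1}=1$ over $\F_{q}$; once this is seen, the three cases reduce to routine Laurent-series computations, the only further point being the elementary properties of $\mathrm{Res}_{\infty}$ over a finite field (invariance under affine change of coordinate, vanishing on polynomial differentials), which are standard and can be checked directly on $x^{j}\,dx$.
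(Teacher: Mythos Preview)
Your proof is correct. Parts \ref{kV}, \ref{Vallfinite}, \ref{kmore2Linear} follow essentially the paper's line (your choice $s=x$, $a=1$ in \ref{kV} is even slightly cleaner, as it also covers $\F_2$). In \ref{k2V} the paper is more direct: writing $R=x^2+x$ and using the decomposition $\F_2[x]=\F_2[R]\oplus x\F_2[R]$, one sees immediately that $s=c+dx$ with $c,d\in\F_2[R]$ gives $s(x)+s(x+1)=d$, hence $V=\F_2[R]$; your inductive argument for the inclusion $\F_2[R]\subseteq V$ works too but is longer. The substantive divergence is \ref{Vnotallfinite}. The paper stays elementary, working in the basis $m_{i,j}=x^iR^j$ of $\k[x]$ (with $R=x^q-x$, $0\le i\le q-1$, $j\ge0$): since $R(ax+b)=aR$, one computes that $m_{i,j}-a\,m_{i,j}(ax+b)$ is a combination of $m_{u,j}$ with $u\le i$, whose $m_{q-1,j}$-coefficient is $1-a^{q+j}$, and this vanishes for all $a\in\F_q^*$ precisely when $q-1\mid j+1$; hence the coordinate functionals ``coefficient of $m_{q-1,\,m(q-1)-1}$'' kill $V$ and separate the $f_m$. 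Your residue functionals achieve the same thing by a more conceptual route, making visible \emph{why} the exponent must be a multiple of $q-1$ (so that $(x^q-x)^{m(q-1)}$ is invariant under the affine group of $\F_q$), at the cost of invoking the invariance of $\mathrm{Res}_\infty$ under affine substitution---a standard fact, but one step beyond the paper's pure bookkeeping.
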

\begin{proof}
\ref{kV}-\ref{Vallfinite}-\ref{kmore2Linear}: For each $i\ge 0$, if $a\in \k^*$ is such that  $a^{i+1}\not=0$, we may choose $s=x^i$, $b=0$ and obtain 
\[s-as(ax+b)=(1-a^{i+1}) x^i\in V.\]
Choosing $i=0$, we obtain $1\in V$, which proves \ref{kV}. If $\k$ is infinite, $V$ contains all monomials and thus $V=\k[x]$, whence \ref{Vallfinite}. If $\lvert \k\rvert\not=2$, with $i=1$ we obtain $x\in V$, which proves \ref{kmore2Linear}.

\ref{Vnotallfinite}:
Suppose now that $\k$ is a finite field with $q$ elements. We consider the polynomial \[R=x^q-x=\prod_{\nu\in\k} (x-\nu).\] For all $a\in\k^*$ and $b\in\k$, the map $x\mapsto ax+b$ is an automorphism of $\A^1_\k$, so $R$ and $R(ax+b)$ have the same roots. This gives $R(ax+b)=a^qR=aR$.

The elements $m_{i,j}=x^iR^j$, with $0\le i\le q-1$ and $j\ge 0$ form a basis of $\k[x]$ as they have all possible degrees. Any element of $V$ is then a linear combination of elements of the form
\[m_{i,j}-am_{i,j}(ax+b)=(x^i-a^{j+1}(ax+b)^i)R^j.\]
Such a polynomial is a linear combination of elements of the form $m_{u,j}$ with $0\le u\le i$, and the coefficient of $m_{i,j}$ is $1-a^{i+j+1}$. Hence, the coefficient of any polynomial $m_{q-1,j}$ of any element of $V$ is zero as soon as $a^{q+j}=1$ for each element $a\in\k^*$, i.e.~when $q+j$ is a multiple of $q-1=\lvert\k\rvert$. This corresponds to ask that $j=m(q-1)-1$ for some $m\ge 1$, and thus proves \ref{Vnotallfinite}.

\ref{k2V}: If $\k=\F_2$, then $V$ is the linear subspace generated by $s+s(x+1)$ with $s\in \k[x]$. The polynomial $R$ is now $R=x^2+x=x(x+1)$. As the polynomials $x^iR^j$ with $i\in \{0,1\}$ and $j\ge 0$ form a basis of $\k[x]$, we obtain
\[\k[x]=\k[R]\oplus x\k[R].\]
Hence, every  $s\in\k[x]$ can be written as $s=c+dx$ with $c,d\in \k[R]$. This gives $s(x)+s(x+1)=d$ and thus proves that $V=\k[R]$.
\end{proof}
\begin{proposition}\label{Prop:SautFinite}
Let $\k$ be a finite field and let $V\subseteq \k[x]$ be the $\k$-linear subspace generated by 
\[\{s-as(ax+b)\mid s\in\k[x], a\in\k^*, b\in\k\}.\]
Then, we have a unique surjective group homomorphism
\[\rho\colon \SAut_\k(\A^2)\twoheadrightarrow \k[x]/V\]
that sends $(x_1,x_2+s(x_1))$ onto the class of $s$ for each $s\in\k[x]$. 
\end{proposition}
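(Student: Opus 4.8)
The plan is to exploit the amalgamated product structure of $\SAut_\k(\A^2)$. It is classical \cite{Jung,Nagata} that $\Aut_\k(\A^2)=\Aff_2(\k)\ast_B\Tri_2(\k)$, with $B=\Aff_2(\k)\cap\Tri_2(\k)$ the subgroup of affine triangular automorphisms. Since the homomorphism $\Jac$ restricts to a surjection onto $\k^*$ on each of $\Aff_2(\k)$, $\Tri_2(\k)$ and $B$, Bass--Serre theory applied to its kernel $\SAut_\k(\A^2)$ shows that the quotient of the Bass--Serre tree by $\SAut_\k(\A^2)$ is still a single edge, whence
\[\SAut_\k(\A^2)=\SAff_2(\k)\ast_{\mathrm{SB}}\SJ,\]
where $\SAff_2(\k):=\Aff_2(\k)\cap\SAut_\k(\A^2)=\Tra_2(\k)\rtimes\SL_2(\k)$, $\SJ:=\Tri_2(\k)\cap\SAut_\k(\A^2)=\{(ax_1+b,\,a^{-1}x_2+s(x_1)):a\in\k^*,b\in\k,s\in\k[x_1]\}$, and $\mathrm{SB}:=B\cap\SAut_\k(\A^2)$. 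To construct $\rho$ it then suffices to construct two group homomorphisms to $\k[x]/V$, one on $\SAff_2(\k)$ and one on $\SJ$, that agree on $\mathrm{SB}$, the second one sending $(x_1,x_2+s(x_1))$ to the class of $s$.

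For the homomorphism on $\SJ$, I would write $\SJ=E\rtimes H$ with $E=\{(x_1,x_2+s(x_1)):s\in\k[x]\}\cong(\k[x],+)$ and $H=\{(ax_1+b,\,a^{-1}x_2):a\in\k^*,b\in\k\}\subseteq\SAff_2(\k)$. A short computation shows that conjugating $(x_1,x_2+s(x_1))$ by $(ax_1+b,\,a^{-1}x_2)$ gives $(x_1,x_2+\tilde s(x_1))$ with $\tilde s(x)=a^{-1}s\bigl(a^{-1}(x-b)\bigr)$, and that $a\,\tilde s(ax+b)=s(x)$, so that $\tilde s-s=\tilde s(x)-a\,\tilde s(ax+b)\in V$. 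Hence $H$ acts trivially on $\k[x]/V$, so the composite $E\cong(\k[x],+)\twoheadrightarrow\k[x]/V$ extends, by being trivial on $H$, to a group homomorphism $\SJ=E\rtimes H\to\k[x]/V$ with the required behaviour on $E$.

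For the homomorphism on $\SAff_2(\k)$, note that $\mathrm{SB}$ is generated by $H$ together with the affine elements of $E$, namely $\{(x_1,x_2+cx_1+d):c,d\in\k\}$. If $\lvert\k\rvert>2$, then every linear polynomial lies in $V$ by Lemma~\ref{lemmaV}\ref{kmore2Linear}, so the homomorphism on $\SJ$ already vanishes on $\mathrm{SB}$ and we may take the homomorphism on $\SAff_2(\k)$ to be $0$. If $\k=\F_2$, then $\SAff_2(\F_2)$ acts faithfully on the four points of $\A^2(\F_2)$ and is isomorphic to $S_4$; I take the homomorphism on $\SAff_2(\F_2)$ to be the sign of this permutation action, with values in the subgroup $\{0,[x]\}\cong\Z/2$ of $\k[x]/V$ (here $[x]\neq 0$ by Lemma~\ref{lemmaV}\ref{k2V}). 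One checks that the two homomorphisms still agree on $\mathrm{SB}$ in this case: translations are even permutations and have class $0$, while $(x_1,x_2+x_1)$ is an odd permutation and has class $[x]$, and these generate $\mathrm{SB}$. The universal property of the amalgamated product then produces $\rho$.

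Surjectivity of $\rho$ is clear, since already the classes of the elements $(x_1,x_2+s(x_1))$ exhaust $\k[x]/V$. For uniqueness, one uses that $\SAut_\k(\A^2)$ is generated by the elementary automorphisms of both kinds, $E$ and $E'=\{(x_1+t(x_2),x_2):t\in\k[x]\}$ (indeed $\SL_2(\k)$ is generated by elementary matrices and $\Tra_2(\k)$ by $(x_1+b,x_2)$ and $(x_1,x_2+d)$, so $\SAff_2(\k)\subseteq\langle E,E'\rangle$, while $\SJ=E\rtimes H$ with $H\subseteq\SAff_2(\k)$): $\rho$ is prescribed on $E$, and on $E'$ it is forced, since $(x_1+t(x_2),x_2)$ is conjugate to $(x_1,x_2-t(x_1))\in E$ by the element $(-x_2,x_1)\in\SL_2(\k)\subseteq\SAut_\k(\A^2)$, and conjugation acts trivially on the values of any homomorphism into the abelian group $\k[x]/V$. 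The only genuinely nontrivial ingredient is the Bass--Serre identification $\SAut_\k(\A^2)=\SAff_2(\k)\ast_{\mathrm{SB}}\SJ$, together with the need to treat $\F_2$ separately by using the sign homomorphism in place of $0$; everything else reduces to routine direct verifications.
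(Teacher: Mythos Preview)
Your proof is correct and follows essentially the same route as the paper: both use the amalgamated product decomposition $\SAut_\k(\A^2)=\SAff_2(\k)\ast_{\mathrm{SB}}\SJ$, define the homomorphism on $\SJ$ via the class of $s$ (checking well-definedness from the very definition of $V$), take the trivial map on $\SAff_2(\k)$ when $\lvert\k\rvert>2$ and the sign of the permutation action on $\A^2(\F_2)$ when $\k=\F_2$, and verify agreement on $\mathrm{SB}$; the uniqueness and surjectivity arguments are likewise the same, using that $E$ and its $\SL_2$-conjugate $E'$ generate $\SAut_\k(\A^2)$.
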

\begin{proof}
We first prove that if $\rho$ exists, it is  unique and surjective. For each $s\in \k[x]$, the elements
\[e_s=(x_1,x_2+s(x_1))\text{ and }\quad e_s'=(x_1-s(x_2),x_2)\] are conjugated by $(-x_2,x_1)\in \SL_2(\k)$. So $\rho(e_s)=\rho(e_s')$, is the class of $s$ in $\k[x]/V$. As $\SAut_\k(\A^2)$ is generated by elements of the form $e_s$ and $e_s'$, this shows that $\rho$ is unique. It also shows that $\rho$ is surjective, as we may choose any $s\in \k[x]$.

To show that the definition on all $e_s$ extends to a well-defined group homomorphism $\rho$, we define two group homomorphisms  on $T$ and $A$, the triangular and affine groups:
\[\rho_T\colon T\to \k[x]/V, \quad \rho_A\colon A\to \k[x]/V\] and check that $\rho_T|_{A\cap T}=\rho_A|_{A\cap T}$. As $\SAut_\k(\A^2)$ is the amalgamated product of $T$ and $A$ over their intersection, this will give the result.

We first consider the triangular group $T=\Tri_2(\k)\cap \SAut_\k(\A^2)$:
\[T=\{(ax_1+b,a^{-1}x_2+s(x_1))\mid a\in \k^*, b\in \k, s\in \k[x]\}\simeq \k[x]\rtimes \Aut(\A^1).\]
The action of $\Aut(\A^1)$ on $\k[x]$ is the following one: $ax_1+b\in \Aut(\A^1)$ acts on $s\in \k[x]$ as $s\mapsto a s(ax+b)$. We define $\rho_T((ax_1+b,a^{-1}x_2+s(x_1)))$ to be the class of $s$. To check that this is a well-defined group homomorphism, we just need to check that $s$ and $s(ax+b)$ have the same class, which is exactly given by the definition of $V$. Moreover, $\rho_T(e_s)$ is indeed the class of $s$, for each $s\in \k[x]$, as desired.

Now we go to the affine group  $A=\Aff_2(\k)\cap \SAut_\k(\A^2)$:
\[A=\{(ax_1+bx_2+\tau,cx_2+dx_1+\epsilon)\mid \left(\begin{smallmatrix} a & b\\ c & d\end{smallmatrix}\right)\in \SL_2(\k), (\tau,\epsilon)\in \k^2\}\]
We then have $A=\SL_2(\k)\rtimes (\k^2,+)$.
If $\lvert \k\rvert>2$, we define $\rho_A(A)=\{0\}$.  If $\lvert \k\rvert=2$, then $A$ acts on the four points of $\A^2(\F_2)$. We send an element of $A$ onto $x$ or $0$, depending if the permutation is odd or even. This gives a group homomorphism $\rho_A\colon A\to \k[x]/V$.

We have now defined $\rho_T$ and $\rho_A$ and prove that $\rho_T|_{A\cap T}=\rho_A|_{A\cap T}$. The group $A\cap T$ is equal to 
\[A\cap T=\{(ax_1+b,a^{-1}x_2+s(x_1))\mid a\in \k^*, b\in \k, s\in \k[x] \deg(s)\le 1\}.\]
The image $\rho_T(f)$ of $f=(ax_1+b,a^{-1}x_2+s(x_1))$ is the class of $s$. If $\lvert \k \rvert >2$, the class of $s$ is trivial since $\deg(s)\le 1$ (Lemma~\ref{lemmaV}\ref{kmore2Linear}), so we obtain $\rho_T(f)=\rho_A(f)=0$.
 If $\lvert \k \rvert=2$, then $f=(x_1+b,x_2+s(x_1))$ and the translations being even (Lemma~\ref{EvenAction}\ref{TranslationsAreEven}), the element $\rho_A(f)$ is equal to $\rho_A((x_1,x_2+x_1))$ if $\deg(f)=1$ and to $0$ if $\deg(f)\le 0$. This implies that $\rho_A(f)=\rho_T(f)$ as $\k\subseteq V$ (Lemma~\ref{lemmaV}\ref{kV}).
\end{proof}
We can now give the proof of Proposition~\ref{PropFiniteA2}:
\begin{proof}[Proof of Proposition~\ref{PropFiniteA2}]
As $\k$ is finite,  Proposition~\ref{Prop:SautFinite} gives a unique surjective group homomorphism $\rho\colon \SAut_\k(\A^2)\twoheadrightarrow \k[x]/V$ that sends $(x_1,x_2+s(x_1))$ onto the class of $s$ for each $s\in\k[x]$.

By Lemma~\ref{lemmaV}\ref{kV}, every translation is in the kernel. Moreover, $\SL_2(\k)$ is in the kernel if and only if $\lvert \k\rvert>2$ (Lemma~\ref{lemmaV}\ref{kmore2Linear}-\ref{k2V}).

As $\k[x]/V$ is infinite dimensional (Lemma~\ref{lemmaV}), it is isomorphic to $\k[x]$ as a $\k$-vector space. This gives a surjective group homomorphism $\tau\colon \SAut_\k(\A^2)\twoheadrightarrow (\k[x],+)$ whose kernel contains all translations. The kernel moreover contains $\SL_2(\k)$ if and only if $\lvert \k\rvert>2$. In the case where $\lvert \k\rvert=2$, we may also get a similar group homomorphism $\tau'\colon \SAut_\k(\A^2)\twoheadrightarrow (\k[x],+)$ whose kernel contains also $\SL_2(\k)$  by taking $\k[x]/V\to \k[x](V+\k x)$.
\end{proof}

\section{Families of automorphisms}
Before working with elements of $\Aut_{\k[t,\frac{1}{t}]}(\A^n)$ and $\Aut_{\k[t]}(\A^n)$, where $\k$ is a field, we recall the following lemma, probably known to all specialists. We give back a proof for a lack of reference. We will apply the next lemma to the case where $A=\k[t]$ and $B=\k[t,\frac{1}{t}]$.
\begin{lemma}\label{lem:ABJac}
Let $B$ be an integral domain and let $A\subseteq B$ be a subring. For each integer $n\ge 1$, we have
\[\Aut_A(\A^n)=\{ f\in \End_A(\A^n)\cap \Aut_B(\A^n) \mid \Jac(f)\in A^*\}.\]
\end{lemma}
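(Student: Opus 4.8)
The statement is a routine but useful descent-type claim: an $A$-endomorphism that happens to be a $B$-automorphism is an $A$-automorphism precisely when its Jacobian lies in $A^*$. The plan is to prove the two inclusions separately. The inclusion ``$\subseteq$'' is immediate: if $f\in\Aut_A(\A^n)$ then certainly $f\in\End_A(\A^n)$, and since $A\subseteq B$ we have $f\in\Aut_B(\A^n)$; moreover the computation $D(f\circ f^{-1})=D(\mathrm{id})$ together with the chain rule $D(f\circ g)=g^*(Df)\cdot Dg$ (recalled in Section~\ref{NotRem}) forces $\Jac(f)\in (A[x_1,\ldots,x_n])^*=A^*$. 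So the content is the reverse inclusion.

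For ``$\supseteq$'', suppose $f\in\End_A(\A^n)\cap\Aut_B(\A^n)$ with $\Jac(f)\in A^*$. I want to show that the inverse $g:=f^{-1}\in\Aut_B(\A^n)$ actually has all its coordinates in $A[x_1,\ldots,x_n]$, which will give $g\in\End_A(\A^n)$ and hence $f\in\Aut_A(\A^n)$. Write $f=(f_1,\ldots,f_n)$ with $f_i\in A[x_1,\ldots,x_n]$ and $g=(g_1,\ldots,g_n)$ with a priori $g_i\in B[x_1,\ldots,x_n]$. The key relations are $g^*(f_i)=x_i$ for all $i$, i.e. the $f_i$ generate $B[x_1,\ldots,x_n]$ as a $B$-algebra via $g$, equivalently $B[f_1,\ldots,f_n]=B[x_1,\ldots,x_n]$. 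The strategy is to argue that already $A[f_1,\ldots,f_n]=A[x_1,\ldots,x_n]$, from which $g^*$ restricts to an isomorphism $A[x_1,\ldots,x_n]\to A[x_1,\ldots,x_n]$ and one reads off $g_i=g^*(x_i)\in A[x_1,\ldots,x_n]$.

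To establish $A[f_1,\ldots,f_n]=A[x_1,\ldots,x_n]$ I would use the Jacobian hypothesis as a smoothness/étale input. One clean route: the hypothesis $\Jac(f)\in A^*$ means the $A$-algebra map $f^*\colon A[x_1,\ldots,x_n]\to A[x_1,\ldots,x_n]$, $x_i\mapsto f_i$, is étale (the module of differentials of the target over the source is killed by the invertible element $\Jac(f)$, hence is zero, and the map is of finite type between regular rings of the same dimension, so flat); an injective étale endomorphism of $A[x_1,\ldots,x_n]$ that becomes surjective after the faithfully flat base change $A\to B$ is already surjective, because surjectivity of the corresponding morphism of schemes can be checked after base change, or directly because $\mathrm{Spec}\,B[x]\to\mathrm{Spec}\,A[x]$ is faithfully flat. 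Alternatively, and perhaps more in the elementary spirit of the paper, one can give a degree-theoretic argument: pick $P\in A[x_1,\ldots,x_n]$; since $f\in\Aut_B(\A^n)$ we can write $P=\sum_\alpha c_\alpha f^\alpha$ with $c_\alpha\in B$; then clear denominators by choosing $b\in A$ nonzero with $b\in$ the relevant localization, compare coefficients of the monomials of lowest degree appearing, and use that $f$ has an invertible Jacobian (hence, after a linear change, a well-behaved leading form — for $n=2$ this is Jung–van der Kulk, but the general statement needs the flatness argument) to solve for the $c_\alpha$ inside $A$ inductively on degree. The main obstacle is precisely this descent step — showing the $c_\alpha$ lie in $A$, not merely in $B$ — and I expect the cleanest treatment is the faithfully-flat-descent formulation above, avoiding any appeal to an unknown structure of $\Aut_A(\A^n)$ in general $n$.
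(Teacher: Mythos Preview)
Your easy inclusion is fine. The hard inclusion, however, has a genuine gap in the route you emphasize.

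Your main argument hinges on the claim that $A\to B$ is faithfully flat, so that surjectivity of $f^*\otimes_A B$ descends to surjectivity of $f^*$. But the hypothesis is only that $A$ is a subring of an integral domain $B$; nothing more. In the paper's principal application one has $A=\k[t]$ and $B=\k[t,\tfrac{1}{t}]$, a localization which is flat but \emph{not} faithfully flat (the prime $(t)$ is lost). So the descent step ``surjective after base change to $B$ $\Rightarrow$ surjective over $A$'' simply fails at the level of generality required. A second, independent problem: you justify flatness of $f^*$ via ``finite type between regular rings of the same dimension'', but $A$ is an arbitrary integral domain, so $A[x_1,\ldots,x_n]$ need not be regular (or even Noetherian), and miracle flatness is unavailable. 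Your alternative ``degree-theoretic'' sketch is too loose to evaluate, and you yourself flag that it seems to need the flatness input you do not have.

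The paper's argument is entirely elementary and avoids all of this. After a translation one may assume $f$ fixes the origin; the Jacobian hypothesis then says the linear part $(f_{1,1},\ldots,f_{n,1})$ lies in $\GL_n(A)$, so after composing with its inverse one reduces to $f$ tangent to the identity. Now write the (a priori $B$-coefficient) inverse $g=f^{-1}$ in homogeneous components $g_{i,j}$ and expand the identity $g_i(f_1,\ldots,f_n)=x_i$ degree by degree: the degree-$1$ part gives $g_{i,1}=x_i\in A[x_1,\ldots,x_n]$, and for each $j>1$ the degree-$j$ part expresses $g_{i,j}$ as minus the degree-$j$ piece of $\sum_{m<j} g_{i,m}(f_1,\ldots,f_n)$, which lies in $A[x_1,\ldots,x_n]$ by induction. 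Thus every $g_{i,j}$ has $A$-coefficients. The only role of $B$ is to guarantee that $g$ is a polynomial (so the recursion terminates); the coefficients themselves are produced over $A$ by the recursion. This is the missing idea in your proposal: use $\Jac(f)\in A^*$ to normalize to tangent-to-identity, then solve for the inverse formally over $A$.
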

\begin{proof}
As $\Aut_A(\A^n)\subseteq \End_A(\A^n)\cap \Aut_B(\A^n)$ and as the Jacobian of an element of $\Aut_A(\A^n)$ belongs to $A^*$, we have the inclusion
\[\Aut_A(\A^n)\subseteq\{ f\in \End_A(\A^n)\cap \Aut_B(\A^n) \mid \Jac(f)\in A^*\}.\]
Conversely, let us take an element $f\in \End_A(\A^n)\cap \Aut_B(\A^n)$ with $\Jac(f)\in A^*$. We want to prove that $f\in\Aut_A(\A^n)$. Replacing $f$ with $\tau\circ f$ where $\tau\in \Aut_A(\A^n)$ is a translation, we may assume that $f$ fixes the origin $(0,\ldots,0)\in \A^n$. We then write $f=(f_1,\ldots,f_n)$ with $f_i\in A[x_1,\ldots,x_n]$ and may write $f_i=f_{i,1}+f_{i,2}+\cdots+f_{i,d}$ where each $f_{i,j}\in A[x_1,\ldots,x_n]$ is homogeneous of degree $j$, and $d$ is the maximal degree of all polynomials $f_1,\ldots,f_n$.

Replacing $x_1,\ldots,x_n$ with $0$ in the derivative $Df=(\frac{\partial f_i}{\partial x_j})_{i,j=1}^n$ of $f$ gives $M=(Df)_{(0,\ldots,0)}=(\frac{\partial f_{i,1}}{\partial x_j})_{i,j=1}^n$. As $\Jac(f)=\det(Df)\in A^*$, we find that $\det(M)=\Jac(f)\in A^*$, so $M\in \GL_n(A)$. Replacing $f$ with $(f_{1,1},\ldots,f_{n,1})^{-1}\circ f$, we may thus assume that $\Jac(f)=1$ and that $(f_{1,1},\ldots,f_{n,1})=(x_1,\ldots,x_n)$ (in this case, one often says that $f$ is \emph{tangent to the identity}).

As $f\in \Aut_B(\A^n)$, it admits an inverse $g=f^{-1}\in \Aut_B(\A^n)$, that we write similarly as $f$ as $g=(g_1,\ldots,g_n)$, where $g_i=g_{i,1}+g_{i,2}+\cdots+g_{i,D}$ and $g_{i,j}\in B[x_1,\ldots,x_n]$ is homogeneous of degree $j$, and $D$ is the maximal degree of all polynomials $g_1,\ldots,g_n$. We now prove, by induction on $j\ge 1$, that $g_{i,j}\in A[x_1,\ldots,x_n]$ for each $i\in \{1,\ldots,n\}$. For this, we use that $g\circ f=(x_1,\ldots,x_n)$, which means that
\[g_i(f_1,\ldots,f_n)=x_i\]
for each $i\in \{1,\ldots,n\}$. This gives

\begin{equation}\label{xigf}\tag{$\spadesuit$}x_i=\sum_{m=1}^D g_{i,m}(f_1,\ldots,f_n)=\sum_{m=1}^D g_{i,m}(x_1+f_{1,2}+\cdots+f_{1,d},\ldots,x_n+f_{n,2}+\cdots+f_{n,d}).\end{equation}
The homogeneous part of degree $1$ of \eqref{xigf} gives $x_i=g_{i,1}(x_1,\ldots,x_n)$, which implies that $g_{i,1}=x_i\in A[x_1,\ldots,x_n]$ for each $i\in \{1,\ldots,n\}$. We then assume that $j>1$ and prove that $g_{i,j}\in A[x_1,\ldots,x_n]$, assuming that $g_{i,m}\in A[x_1,\ldots,x_n]$ for each $i\in \{1,\ldots,n\}$ and each $m<j$. The homogeneous part of degree $j$ of \eqref{xigf}  gives $0=\delta+g_{i,j}(x_1,\ldots,x_n)$, where $\delta$ is the homogeneous part of degree $i$ of $\sum_{m=1}^{j-1} g_{i,m}(f_1,\ldots,f_n)\in A[x_1,\ldots,x_n]$. This proves that $g_{i,j}=-\delta\in A[x_1,\ldots,x_n]$ as desired.
\end{proof}
\subsection{Closed normal subgroups of $\SAut_\k(\A^n)$}
\begin{proposition}\label{TheProp}
Let $\k$ be a field, let $n\ge 1$ be an integer and let $g\in \Aut_{\k[t]}(\A^n)\setminus \{\mathrm{id}\}$ be  such that replacing $t$ with zero one obtains $g(0)=\mathrm{id}\in \Aut_{\k}(\A^n)$.

There exist coprime integers $a,b\ge 1$ such that the following hold:
\begin{enumerate}
\item\label{Prop1}
For each extension $\k\subseteq L$ and each $\epsilon=(\epsilon_1,\ldots,\epsilon_n)\in L^n$, writing 
\[\rho_\epsilon=(x_1+t^{-b}\epsilon_1,\ldots,x_n+t^{-b} \epsilon_n)\in \Aut_{L[t,\frac{1}{t}]}(\A^n),\]
the element 
\[h_\epsilon = (\rho_\epsilon)^{-1} \circ g(t^a)\circ (\rho_\epsilon)\]
belongs to $\Aut_{L[t]}(\A^n)$ and replacing $t$ with zero in $h_\epsilon$ gives a translation
\[h_\epsilon(0)\in \Aut_L(\A^n).\]
Here, $g(t^a)\in \Aut_{\k[t^a]}(\A^n)$ is the element obtained by replacing $t$ with $t^a$ in $g$.
\item\label{Prop2}
The set $\{\epsilon\in \A^n\mid h_\epsilon(0)\not=\mathrm{id}\}$ is a dense open subset of $\A^n$. In particular, for each infinite extension $\k\subseteq L$, there exists $(\epsilon_1,\ldots,\epsilon_n)\in L^n$ such that $h_\epsilon(0)\in \Aut_L(\A^n)$ is a non-trivial translation.
\end{enumerate}
\end{proposition}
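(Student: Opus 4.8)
The plan is to extract from $g$ a "leading term" that detects how $g$ degenerates near $t=0$, and to show that a suitable rescaling-and-conjugation isolates this leading term into a translation. Write $g=(x_1+g_1,\ldots,x_n+g_n)$ with $g_i\in\k[t][x_1,\ldots,x_n]$ and $g_i\in t\k[t][x_1,\ldots,x_n]$ since $g(0)=\mathrm{id}$. Since $g\ne\mathrm{id}$, some $g_i$ is nonzero; among all monomials $t^k x_1^{d_1}\cdots x_n^{d_n}$ appearing (in any coordinate) with nonzero coefficient, I would choose the pair $(a,b)$ of coprime positive integers so that the line of "slope" $-b/a$ through the origin in the $(|d|,k)$-plane (where $|d|=d_1+\cdots+d_n$) is the highest supporting line touching the Newton polygon of $g$ — concretely, $a,b\ge1$ coprime are chosen so that $k-\tfrac{b}{a}(|d|)$ is minimized on the support of $g$, with the minimum attained only along this edge. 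The existence of such coprime $a,b\ge1$ is a standard Newton-polygon fact; one must check $a,b\ge1$, which follows because $g\in\Aut_{\k[t]}$ fixes the origin-behavior ($k\ge1$) and because a genuine automorphism cannot have all its nontrivial mass at $|d|=0$ (that would make $g$ itself a translation family — a case one can treat directly, as then $a=1$, $b$ arbitrary works, or rather the degeneration is already a translation).

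The key computation is to track, for the monomial $t^{k}x_1^{d_1}\cdots x_n^{d_n}$ in coordinate $i$, what happens under $g\mapsto g(t^a)$ and then conjugation by $\rho_\epsilon$. Replacing $t$ by $t^a$ turns the monomial's $t$-degree into $ak$. Conjugating by $\rho_\epsilon=(x_j+t^{-b}\epsilon_j)_j$ substitutes $x_j\mapsto x_j+t^{-b}\epsilon_j$ inside each $g_i$ and then applies $x_i\mapsto x_i - t^{-b}\epsilon_i$ on the outside; expanding $(x_j+t^{-b}\epsilon_j)^{d_j}$ produces terms with $t$-exponent $ak - b\ell$ for $0\le\ell\le|d|$, the extreme term $\ell=|d|$ contributing $t^{ak-b|d|}\epsilon_1^{d_1}\cdots\epsilon_n^{d_n}$ (a constant in the $x$'s). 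By the choice of $(a,b)$, the quantity $a(k-\tfrac{b}{a}|d|)=ak-b|d|$ is $\ge 0$ on the whole support and $=0$ exactly on the chosen edge; so \emph{no negative powers of $t$ survive} — this is what makes $h_\epsilon\in\Aut_{L[t]}(\A^n)$ (using also Lemma~\ref{lem:ABJac}, since $h_\epsilon\in\End_{L[t]}\cap\Aut_{L[t,1/t]}$ and $\Jac(h_\epsilon)=\Jac(g(t^a))\in L[t]^*$ is unaffected by conjugation). Setting $t=0$ kills every term with $ak-b|d|>0$, leaving exactly the edge monomials evaluated at $t=0$: the $\ell=|d|$ contributions give the constant (translation) part, and one must argue the remaining edge contributions with $\ell<|d|$ in the relevant coordinates also collapse — but those still carry positive powers of $x$ and must actually be checked to cancel or to vanish; the cleanest route is that on the edge, by coprimality of $a,b$, the set of $(|d|,k)$ with $ak=b|d|$ consists of multiples of $(a,b)/\gcd=(a,b)$, and following through the substitution shows the $t=0$ specialization is a polynomial in $x$ of the form $x_i+(\text{constant})$ in each coordinate, i.e.\ a translation $h_\epsilon(0)$.

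For part~\ref{Prop2}: the constant (translation) part of $h_\epsilon(0)$ in coordinate $i$ is, by the above, a nonzero polynomial expression $P_i(\epsilon_1,\ldots,\epsilon_n)$ — explicitly a sum over the edge monomials in $g_i$ of $c_{i,d}\,\epsilon_1^{d_1}\cdots\epsilon_n^{d_n}$, where the $c_{i,d}$ are the (nonzero, for at least one $i$ and one $d$ on the edge) coefficients of $g$. Since the edge is by construction nonempty and carries a nonzero coefficient, at least one $P_i$ is a nonzero polynomial in $\epsilon$; hence $\{\epsilon : h_\epsilon(0)\ne\mathrm{id}\}=\{\epsilon: \text{some }P_i(\epsilon)\ne0\}$ is a nonempty Zariski-open subset of $\A^n$, dense because $\A^n$ is irreducible. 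Over an infinite extension $L$, a nonempty open subset of $\A^n$ has an $L$-point, giving the desired $\epsilon\in L^n$ with $h_\epsilon(0)$ a nontrivial translation. The main obstacle I anticipate is the bookkeeping in the previous paragraph: verifying rigorously that after conjugation \emph{all} surviving monomials at $t=0$ are constants in $x$ (no spurious positive-$x$-degree terms survive the specialization), which is exactly where the coprimality of $a$ and $b$ and the "supporting edge" optimality of the slope $-b/a$ are essential — one wants the minimum of $ak-b|d|$ to be attained \emph{only} along that edge, and one must be careful that the intermediate terms ($0\le\ell<|d|$) in the binomial expansions, which have $t$-exponent $ak-b\ell>ak-b|d|$, indeed all carry strictly positive $t$-exponent and so die at $t=0$.
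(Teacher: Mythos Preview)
Your approach is the paper's: the paper simply sets $\tfrac{a}{b}=\max\bigl\{\tfrac{m}{j}:q_{i,j,m}\ne0\bigr\}$, where $q_{i,j,m}\in\k[x_1,\dots,x_n]$ is the degree-$m$ homogeneous part of the coefficient of $t^j$ in $g_i-x_i$; this ratio is exactly your supporting-edge slope, and the computation of $h_\epsilon$ and of $h_\epsilon(0)$ is identical to yours. Your worry about the intermediate binomial terms ($0\le\ell<|d|$) is unfounded, and you in fact resolve it yourself at the end: the term coming from $\ell$ carries $t$-exponent $(ak-b|d|)+b(|d|-\ell)$; on the edge the first summand is $0$ and the second is $>0$ for $\ell<|d|$, while off the edge the first summand is already $>0$. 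So at $t=0$ only the $\ell=|d|$ contributions from edge monomials survive, and these are constants in $x$---a translation, as claimed. For part~\ref{Prop2} the paper argues, like you, that $P_i=\sum_{m/j=a/b}q_{i,j,m}$ is nonzero for some $i$; it observes that distinct edge terms have distinct $x$-degrees (since $j=bm/a$ is determined by $m$), so no cancellation can occur.

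One place where you go further than the paper but stumble: you correctly flag the case where $g$ is itself a translation family (all nonzero $q_{i,j,m}$ have $m=0$), which forces $\max(m/j)=0$ and makes it impossible to choose $a\ge1$ with $a/b$ equal to this maximum. Your proposed fix (``$a=1$, $b$ arbitrary works'') is wrong: translations commute, so $h_\epsilon=\rho_\epsilon^{-1}\,g(t^a)\,\rho_\epsilon=g(t^a)$ and hence $h_\epsilon(0)=g(0)=\mathrm{id}$ for every $\epsilon$, and part~\ref{Prop2} fails. The paper's proof glosses over this case as well. In the only application (Proposition~\ref{Prop:normalclosedExpli}) the issue is harmless: if $g=\tau^{-1}f^{-1}\tau f$ is a translation family then $g(1)$ is already a nontrivial translation lying in the commutator set, and no degeneration is needed.
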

\begin{proof}
We write $g=(g_1,\ldots,g_n)$. For each $i\in \{1,\ldots,n\}$, the element $g_i\in \k[t,x_1,\ldots,x_n]$ is such that $g_i(0)=x_i$, where we replace here $t$ with $0$. We can then uniquely write $g_i$ as
\[g_i=x_i+\sum_{j\ge 1, m\ge 0} q_{i,j,m}t^j\]
where each $q_{i,j,m}\in \k[x_1,\ldots,x_n]$ is homogeneous of degree $m$. 

We now choose the two coprime integers $a,b\ge 1$ such that
\[\frac{a}{b}=\max \left.\left\{\frac{m}{j} \right| q_{i,j,m}\not=0\right\}.\]
This is possible since $g$ is not the identity, so at least one $q_{i,j,m}$ is not zero.

For each field extension $\k\subseteq L$ and each $\epsilon=(\epsilon_1,\ldots,\epsilon_n)\in L^n$, we write  
\[\rho_\epsilon=(x_1+t^{-b}\epsilon_1,\ldots,x_n+t^{-b} \epsilon_n)\in \Aut_{L[t,\frac{1}{t}]}(\A^n),\]
as in the statement of the proposition. Replacing $t$ with $t^a$ in $g$, we obtain $g(t^a)\in \Aut_{\k[t^a]}(\A^n)\subseteq \Aut_{\k[t]}(\A^n)$. Conjugating this element by $\rho_\epsilon$, we obtain 
\[h_\epsilon = (\rho_\epsilon)^{-1} \circ g(t^a)\circ (\rho_\epsilon)\in \Aut_{L[t,\frac{1}{t}]}(\A^n)\]
We now prove that $h_\epsilon$ is actually an element of $\Aut_{L[t]}(\A^n)$ and that replacing $t$ with $0$, we obtain a translation $h_\epsilon(0)\in \Aut_L(\A^n)$.

For $i\in \{1,\ldots,n\}$, the $i$-th coefficient of $h$ is
\[h_i=x_i+\sum_{j\ge 1, m\ge 0} q_{i,j,m}(x_1+t^{-b}\epsilon_1,\ldots,x_n+t^{-b} \epsilon_n)\cdot t^{aj}\]
As $aj-bm\ge 0$ if $q_{i,j,m}\not=0$ (this inequality is equivalent to $ \frac{m}{j}\le \frac{a}{b}$), we find that $h_i\in L[t,x_1,\ldots,x_n]$, i.e.~there is no denominator in $t$ in $h_i$. Moreover, replacing $t$ with $0$ in $h_i$ gives
\[h_i(0)=x_i+\sum_{\frac{m}{j}=\frac{a}{b} } q_{i,j,m}(\epsilon_1,\ldots, \epsilon_n)=x_i+c_i,\]
where $c_i\in L$.

The Jacobian of $g$ being an invertible element of $\k[t]$, it belongs to $\k^*$. Hence, the Jacobian of $h_\epsilon$ is the same element of $\k^*$, which implies that $h_\epsilon\in \Aut_{L[t]}(\A^n)$ (Lemma~\ref{lem:ABJac} applied to the integral domains $A=L[t]\subseteq B=L[t,\frac{1}{t}]$). 

Replacing $t$ with $0$ in $h$ give a translation $h_\epsilon(0)\in \Aut_L(\A^n)$, as $h_i(0)=x_i+c_i$ for each $i\in \{1,\ldots,n\}$. This proves then~\ref{Prop1}.

To prove~\ref{Prop2}, we observe that the translation $h_\epsilon(0)$ is the identity if and only if $c_i=0$ for each $i$. This corresponds to ask that 
\[\sum_{\frac{m}{j}=\frac{a}{b} } q_{i,j,m}(\epsilon_1,\ldots, \epsilon_n)=0\]
for each $i\in \{1,\ldots,n\}$ and thus gives an open subset of $\A^n$. It remains to see that this open subset is not empty, which corresponds to show that some $i\in \{1,\ldots,n\}$ exists such that the polynomial $P_i=\sum\limits_{\frac{m}{j}=\frac{a}{b} } q_{i,j,m}$ is not the zero polynomial. By definition of $a$ and $b$, there exists $i,j,m$ such that $q_{i,j,m}\not=0$ and such that $\frac{m}{j}=\frac{a}{b}$. We fix such an $i$ and then observe that all polynomials $q_{i,j,m}$ with $\frac{m}{j}=\frac{a}{b}$ have distinct degrees, as $m$ and $\frac{a}{b}$ determine $j=\frac{bm}{a}$. Hence all non-zero polynomials appearing in the sum defining $P_i$ are linearly independent over $\k$ and thus $P_i$  is not the zero polynomial. This gives the fact that open subset of~\ref{Prop2} is not empty, and hence contains points over each infinite extension of $\k$, which proves~\ref{Prop2}.
\end{proof}

\begin{proposition}\label{Prop:normalclosedExpli}
Let $\k$ be an infinite field, let $n\ge 2$ and let $f\in \SAut_\k(\A^n)\setminus \Tra_n(\k)$. There exists $h\in \SAut_{\k[t]}(\A^n)$ satisfying the following:

\begin{enumerate}
\item
For each $t_0\in \k^*$, replacing $t$ with $t_0$ gives an element  
\[h(t_0)\in\{\rho (\tau^{-1} f^{-1} \tau f) \rho^{-1}\mid \rho,\tau \in \Tra_n(\k)\}.\]
\item
Replacing $t$ with $0$ in $h$ gives $h(0)\in  \Tra_n(\k)\setminus \{\mathrm{id}\}$.
\end{enumerate}
\end{proposition}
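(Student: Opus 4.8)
The plan is to obtain $h$ by applying Proposition~\ref{TheProp} to a suitable one-parameter commutator attached to $f$.

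First I would fix the parameter of the commutator. Since $f\notin\Tra_n(\k)$ and $\k$ is infinite, the contrapositive of Lemma~\ref{Lem:CentraliserTranslations} shows that $f$ does not commute with every translation; so I may pick $v=(v_1,\ldots,v_n)\in\k^n$ with $f\circ\sigma\neq\sigma\circ f$ for $\sigma=(x_1+v_1,\ldots,x_n+v_n)$. Then set $\tau_t=(x_1+tv_1,\ldots,x_n+tv_n)\in\Tra_n(\k[t])$ and
\[g=\tau_t^{-1}\circ f^{-1}\circ\tau_t\circ f\in\Aut_{\k[t]}(\A^n).\]
Substituting $t=0$ turns $\tau_t$ into the identity, so $g(0)=f^{-1}\circ f=\mathrm{id}$; substituting $t=1$ turns $g$ into $\sigma^{-1}\circ f^{-1}\circ\sigma\circ f\neq\mathrm{id}$, so $g\neq\mathrm{id}$ as an element of $\Aut_{\k[t]}(\A^n)$. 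Moreover $\Jac(\tau_t)=\Jac(f)=1$ forces $\Jac(g)=1$, so in fact $g\in\SAut_{\k[t]}(\A^n)$.

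Next I would feed $g$ into Proposition~\ref{TheProp}, obtaining coprime integers $a,b\ge 1$ and, for $\rho_\epsilon=(x_1+t^{-b}\epsilon_1,\ldots,x_n+t^{-b}\epsilon_n)$, the family $h_\epsilon=(\rho_\epsilon)^{-1}\circ g(t^a)\circ(\rho_\epsilon)\in\Aut_{L[t]}(\A^n)$ with $h_\epsilon(0)$ a translation and $\{\epsilon\mid h_\epsilon(0)\neq\mathrm{id}\}$ a dense open subset of $\A^n$. As $\k$ is infinite, I can choose $\epsilon\in\k^n$ with $h_\epsilon(0)\in\Tra_n(\k)\setminus\{\mathrm{id}\}$, and set $h=h_\epsilon$. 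Since $\Jac(h)=\Jac(g)=1$ we get $h\in\SAut_{\k[t]}(\A^n)$, and $h(0)$ is a non-trivial translation, which is assertion~(2).

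Finally I would check assertion~(1) by substituting $t=t_0\in\k^*$ in $h=(\rho_\epsilon)^{-1}\circ g(t^a)\circ(\rho_\epsilon)$. The factor $\rho_\epsilon$ becomes the translation $\rho=(x_1+t_0^{-b}\epsilon_1,\ldots,x_n+t_0^{-b}\epsilon_n)\in\Tra_n(\k)$, and, writing $g(t^a)=\tau_{t^a}^{-1}\circ f^{-1}\circ\tau_{t^a}\circ f$ with $\tau_{t^a}=(x_1+t^av_1,\ldots,x_n+t^av_n)$, the factor $g(t^a)$ becomes $\tau^{-1}\circ f^{-1}\circ\tau\circ f$ with $\tau=(x_1+t_0^av_1,\ldots,x_n+t_0^av_n)\in\Tra_n(\k)$. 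Hence $h(t_0)=\rho^{-1}\circ(\tau^{-1}f^{-1}\tau f)\circ\rho$, which is of the required form with conjugating translation $\rho^{-1}\in\Tra_n(\k)$, giving assertion~(1). The only step requiring an idea rather than bookkeeping is the construction of $g$: one wants a $t$-family that collapses to the identity at $t=0$, so that Proposition~\ref{TheProp} applies, yet is non-constant, and the translation-valued commutator $\tau_t^{-1}f^{-1}\tau_t f$ achieves both, with non-triviality coming exactly from Lemma~\ref{Lem:CentraliserTranslations} and the hypothesis $f\notin\Tra_n(\k)$. Beyond that I expect no real obstacle, only the need to keep track of which ring each automorphism is defined over and to confirm all Jacobians equal $1$.
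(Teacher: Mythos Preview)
Your proposal is correct and follows essentially the same route as the paper's proof: you build the one-parameter commutator $g=\tau_t^{-1}f^{-1}\tau_t f$ using a translation not commuting with $f$ (supplied by Lemma~\ref{Lem:CentraliserTranslations}), verify $g(0)=\mathrm{id}$ and $g\neq\mathrm{id}$, apply Proposition~\ref{TheProp} with $L=\k$ to conjugate $g(t^a)$ by a $t^{-b}$-translation and land in $\Aut_{\k[t]}(\A^n)$ with a nontrivial translation at $t=0$, and then read off the specialization at $t_0\in\k^*$. Your write-up of the final substitution is in fact cleaner than the paper's displayed formula, which contains a misprint (it drops the trailing $f$ and writes $f$ in place of $f^{-1}$).
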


\begin{proof}
As $\k$ is infinite, there exists  a translation that does not commute with $f$ (Lemma~\ref{Lem:CentraliserTranslations}). We write this translation as $(x_1+\nu_1,\ldots,x_n+\nu_n)$, with $\nu_1,\ldots,\nu_n\in \k$.

We then define $\tau=(x_1+t\nu_1,x_2+t\nu_2,\ldots,x_n+t\nu_n)\in \SAut_{\k[t]}(\A^n)$ and then consider
\[ g=\tau^{-1}\circ f^{-1}\circ\tau\circ f\in\SAut_{\k[t]}(\A^n).\]
Replacing $t$ with $1$ in $g$ gives a non-trivial element  $g(1)\in\SAut_\k(\A^n)\setminus \{\mathrm{id}\}$, since the translation $\tau(1)$ does not commute with $f$. Hence, $g$ is not the identity. Moreover, replacing $t$ with $0$ in $g$ gives $g(0)=\mathrm{id}$. We may thus apply Proposition~\ref{TheProp} and obtain integers $a,b\ge 1$ and $\epsilon=(\epsilon_1,\ldots,\epsilon_n)\in \A^n(\k)$ such that the following hold:
\[\rho=(x_1+t^{-b}\epsilon_1,\ldots,x_n+t^{-b} \epsilon_n)\in \SAut_{\k[t,\frac{1}{t}]}(\A^n),\]
is such that 
\[h = \rho^{-1} \circ g(t^a)\circ \rho\]
belongs to $\SAut_{\k[t]}(\A^n)$ and replacing $t$ with zero in $h$ gives a non-trivial translation
\[h(0)\in \SAut_\k(\A^n).\]
Replacing $t$ with $t_0\not=0$ in $h$, we obtain 
\[\rho(t_0)^{-1} \circ \tau(t_0^a)^{-1}\circ f \circ\tau(t_0^a)\circ \rho(t_0)\in \{\rho (\tau^{-1} f^{-1} \tau f) \rho^{-1}\mid \rho,\tau \in \Tra_n(\k)\}.\]\end{proof}

\begin{proof}[Proof of Proposition~\ref{PropLimitTrans}] Let $\k$ be an infinite field, let $n\ge 2$ be an integer and let $N\subseteq \SAut_\k(\A^n)$ be a closed normal subgroup with $N\not=\{\mathrm{id}\}$. 

We first prove that $N$ contains a non-trivial translation.
We choose an element $f\in N\setminus \{\mathrm{id}\}$. If $f$ is not a translation, we may apply Proposition~\ref{Prop:normalclosedExpli} and obtain $h\in \SAut_{\k[t]}(\A^n)$ as in the statement. As 
\[\A^1\to \SAut_{\k}(\A^n), t_0\mapsto h(t_0)\]
is a continuous map, the preimage of $N$ is closed. It contains the open subset $\A^1\setminus \{0\}$ and thus contains the whole $\A^1$. This proves that $N$ contains the non-trivial translation $h(0)$. 

As $n\ge 2$, all translations are conjugate in $\SAut_{\k}(\A^n)$. For this, one observes that $\SL_n(\k)$ acts transitively on translations by conjugation. Hence, all translations of $\SAut_{\k}(\A^n)$ are contained in $N$. 
\end{proof}

\subsection{Closed normal subgroups of $\Aut_\k(\A^n)$}
The case of $\Aut_\k(\A^n)$ is much more rigid than the case of $\SAut_\k(\A^n)$. There are much less closed normal subgroups.

Firstly, Proposition~\ref{SLnTame} may be generalised to $\Aut_{\k}(\A^n)$ instead of $\SAut_\k(\A^n)$. The proof is actually much simpler and with any field $\k$ with $\lvert \k\rvert>2$, contrary to the case of $\SAut_\k(\A^n)$ (see Proposition~\ref{PropFiniteA2}, proven in Section~\ref{SecFiniteA2}).  The result follows from \cite[Lemma 4.2]{MaubachPoloni} (see also \cite[Observation 2]{Zygadlo} when $\mathrm{char}(\k)=0$).
\begin{lemma}\cite[Lemma 4.2]{MaubachPoloni} Let $\k$ be a field with $\lvert \k\rvert>2$ and let $n\ge 1$ be an integer. Any normal subgroup $N\subseteq \Aut_\k(\A^n)$ containing $\GL_n(\k)$ also contains all tame automorphisms.
\end{lemma}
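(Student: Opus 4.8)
The plan is to mimic the strategy of Propositions~\ref{TranslationSLn} and~\ref{SLnTame}: reduce to showing that a single family of elementary automorphisms $e_s=(x_1+s(x_2,\ldots,x_n),x_2,\ldots,x_n)$ lies in $N$, and produce these via commutators with diagonal elements of $\GL_n(\k)$, which are now available since $N\supseteq\GL_n(\k)$ and $N\subseteq\Aut_\k(\A^n)$ is only assumed normal (not inside $\SAut$). The key simplification over Proposition~\ref{SLnTame} is that we are allowed to use \emph{any} scalar matrix, so the full group of diagonal matrices $\mathrm{diag}(\alpha_1,\ldots,\alpha_n)$ with all $\alpha_i\in\k^*$ arbitrary — no determinant-one constraint — is at our disposal. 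This is where the hypothesis $\lvert\k\rvert>2$ enters and makes the argument uniform across all characteristics, including $\car(\k)=2$, which in the $\SAut$ case forced the delicate three-parameter computation at the end of Proposition~\ref{TranslationSLn}.

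Concretely, first I would observe that $\Tame_\k(\A^n)=\langle \GL_n(\k),\{e_s\}\rangle$ and that $e_s\circ e_{s'}=e_{s+s'}$, so it suffices to put each monomial elementary automorphism $(x_1+\lambda x_2^{i_2}\cdots x_n^{i_n},x_2,\ldots,x_n)$ into $N$ for every exponent tuple and every $\lambda\in\k$. Second, for a diagonal $\delta=(\alpha_1 x_1,\alpha_2 x_2,\ldots,\alpha_n x_n)\in\GL_n(\k)\subseteq N$ and $e_q$ with $q=\xi x_2^{i_2}\cdots x_n^{i_n}$, the commutator $e_q^{-1}\circ\delta^{-1}\circ e_q\circ\delta\in N$ computes to $(x_1+(\alpha_1^{-1}\alpha_2^{i_2}\cdots\alpha_n^{i_n}-1)\xi x_2^{i_2}\cdots x_n^{i_n},x_2,\ldots,x_n)$ — this is the same computation as in Proposition~\ref{SLnTame} but with $\alpha_1$ now free. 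Third, since $\lvert\k\rvert>2$ there is some $\mu\in\k^*$ with $\mu\neq1$; choosing, say, $\alpha_1=\mu^{-1}$ and $\alpha_2=\cdots=\alpha_n=1$ makes the scalar factor $\mu-1\neq0$, so by scaling $\xi$ appropriately we obtain the monomial elementary automorphism with arbitrary coefficient $\lambda\in\k$. Multiplying these together over all monomials appearing in a given $s$ yields $e_s\in N$, hence $N\supseteq\Tame_\k(\A^n)$.

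There is essentially no serious obstacle here: the only point requiring a moment's care is that $e_q$ need not itself lie in $\GL_n(\k)$ or in $N$, but the commutator $[\delta,e_q]=\delta\,(e_q^{-1}\delta^{-1}e_q)$ is a product of $\delta\in N$ with a conjugate $e_q^{-1}\delta^{-1}e_q$ of an element of $N$, hence lies in $N$ by normality — exactly as exploited in the earlier propositions. One should also note the edge case $n=1$, where $\Tame_\k(\A^1)=\Aut_\k(\A^1)=\Aff_1(\k)$ is generated by $\GL_1(\k)$ and translations $x_1\mapsto x_1+c$; these translations are obtained from the same commutator trick with $q=\xi x_1$ (formally allowing $i$-indices on the single variable), or one simply cites that $\GL_1(\k)$ together with a single non-trivial translation generates $\Aff_1(\k)$ once $\lvert\k\rvert>2$, so the statement is immediate. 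Thus the proof is a three-line adaptation of Proposition~\ref{SLnTame}, and I would present it as such, emphasizing only the replacement of the determinant-one diagonal matrices by arbitrary diagonal matrices as the reason the argument shortens and extends to all fields with more than two elements.
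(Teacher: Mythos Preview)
Your proof is correct and uses the same core mechanism as the paper: a commutator of an elementary automorphism with the diagonal matrix $L=(a x_1,x_2,\ldots,x_n)\in\GL_n(\k)$ for some $a\in\k\setminus\{0,1\}$. The paper, however, skips your reduction to monomials entirely. With your own choice $\alpha_1=\mu^{-1}$, $\alpha_2=\cdots=\alpha_n=1$, the scalar factor $\alpha_1^{-1}\alpha_2^{i_2}\cdots\alpha_n^{i_n}-1=\mu-1$ is independent of the exponents, so the commutator identity works for an arbitrary polynomial $q$ at once: taking $b=\frac{1}{1-a}$ (up to a harmless sign convention) one checks directly that $e_s=L^{-1}\circ(e_{bs}^{-1}\circ L\circ e_{bs})\in N$ for every $s\in\k[x_2,\ldots,x_n]$. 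Thus the paper's version is a literal one-liner, and your longer detour through Propositions~\ref{TranslationSLn}--\ref{SLnTame} and the monomial decomposition, while perfectly valid, is not needed here.
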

\begin{proof}
It suffices to show that show that 
\[ e_s=(x_1+s(x_2,\ldots,x_n),x_2,\ldots,x_n)\in N\]
for each $s\in \k[x_2,\ldots,x_n]$. Choose $a\in \k\setminus \{0,1\}$ (which is possible since $\lvert \k\rvert>2$), define $L=(ax_1,x_2,\ldots,x_n)$ and $b=\frac{1}{1-a}$. Then, it suffices to check that $e_s=L^{-1} \circ (e_{bs}^{-1}\circ L\circ  e_{bs})\in N$.
\end{proof}
In fact, adding the fact that $N$ is closed but removing that it is normal, Eric Edo obtained the following result:
\begin{proposition} \cite[Theorem 1.2]{Edo}
Let $\k$ be a field of characteristic $0$ and let $n\ge 2$.  Let $H\subseteq \Aut_\k(\A^n)$ be a closed subgroup with $\Aff_k(\A^n)\subsetneq H$. Then $H$ contains all tame automorphisms.
\end{proposition}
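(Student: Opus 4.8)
The plan is to reduce the statement to producing elementary automorphisms inside $H$ and then to exploit the closedness of $H$ through families over $\k[t]$, in the spirit of Proposition~\ref{TheProp}. Recall that $\Tame_\k(\A^n)=\langle \GL_n(\k),\{e_s\mid s\in\k[x_2,\ldots,x_n]\}\rangle$, where $e_s=(x_1+s(x_2,\ldots,x_n),x_2,\ldots,x_n)$; since $\GL_n(\k)\subseteq\Aff_n(\k)\subseteq H$, it suffices to show that $e_s\in H$ for every $s\in\k[x_2,\ldots,x_n]$.

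The entry point is any $f\in H\setminus\Aff_n(\k)$. Composing on the left with a suitable element of $\GL_n(\k)$ and a translation (all of them in $H$), I may assume $f$ is tangent to the identity, say $f=(x_1,\ldots,x_n)+F$ with $F\ne 0$ of order $e\ge 2$. Conjugating by the homotheties $\delta_t=(tx_1,\ldots,tx_n)\in\GL_n(\k)$ yields a family $f_t=\delta_t^{-1}\circ f\circ\delta_t\in\Aut_{\k[t]}(\A^n)$ with $f_t\in H$ for all $t\in\k^*$ and $f_t|_{t=0}=\mathrm{id}$, which is precisely the situation of Proposition~\ref{TheProp}. Its conclusion alone — a nontrivial translation in $H$ — is vacuous here, since $\Tra_n(\k)\subseteq\Aff_n(\k)\subseteq H$, so the real work is to run the same mechanism more finely. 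The plan is: by conjugating $f_t$ (and the automorphisms successively produced from it) by a well-chosen finite sequence of homotheties and monomial shears $(\ldots,x_i+t^{-c}x_j^m,\ldots)$ — each chosen so that every intermediate element stays in $\Aut_{\k[t]}(\A^n)$, i.e.\ no pole in $t$ survives — and then letting $t\to 0$ and using that $H$ is closed, one strips the homogeneous leading part $\hat F$ of $f$ down to a single nonzero component and, after a final linear change, produces a \emph{nonzero} elementary automorphism $e_{s_0}\in H$. Applying this extraction to the non-affine elements of $H$ (of all degrees that occur) should then produce $e_s\in H$ for a family of polynomials $s$ meeting every degree.

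To conclude, consider $S=\{s\in\k[x_2,\ldots,x_n]\mid e_s\in H\}$. Since $e_s\circ e_{s'}=e_{s+s'}$ and conjugation by $(\lambda x_1,x_2,\ldots,x_n)\in\GL_n(\k)$ turns $e_s$ into $e_{\lambda^{-1}s}$, the set $S$ is a $\k$-linear subspace; conjugation by the elements of $\Aff_n(\k)$ fixing the first coordinate shows that $S$ is stable under $s\mapsto s\circ\alpha$ for $\alpha\in\GL_{n-1}(\k)\ltimes\Tra_{n-1}(\k)$; and $S$ is closed, being the preimage of $H$ under the continuous map $s\mapsto e_s$. A short argument in the spirit of Lemma~\ref{lemmaV}\ref{Vallfinite}, using that $\k$ is infinite (for instance that the $\GL_{n-1}(\k)$-representation on homogeneous polynomials of a fixed degree is irreducible in characteristic zero, then pulling in lower-degree terms by translations), upgrades ``$S$ meets every degree'' to $S=\k[x_2,\ldots,x_n]$, whence $\Tame_\k(\A^n)\subseteq H$.

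The main obstacle is the extraction step of the second paragraph: homotheties and translations alone only recover affine data, so passing from one arbitrary non-affine element to a clean elementary one requires the delicate, essentially combinatorial bookkeeping of this shear-and-degenerate construction — choosing the shears so that polynomiality in $t$ is preserved at every stage, and identifying the limit — together with repeated use of the closedness of $H$; this, and the intervention of $\mathrm{char}(\k)=0$ when comparing jets across integer multiplications, is where the real content of Edo's theorem lies. The reductions around it — $\Tame_\k(\A^n)=\langle\GL_n(\k),\{e_s\}\rangle$, and the fact that a closed $\GL_{n-1}(\k)\ltimes\Tra_{n-1}(\k)$-stable $\k$-subspace of $\k[x_2,\ldots,x_n]$ meeting every degree is everything — are routine.
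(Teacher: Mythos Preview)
The paper does not prove this proposition: it is quoted from \cite{Edo} and immediately followed by a new paragraph, with no proof environment in between. There is therefore no ``paper's own proof'' to compare against; what follows is an assessment of your sketch on its own merits.

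Your outline is a reasonable plan, and you are honest that the extraction step --- passing from an arbitrary $f\in H\setminus\Aff_n(\k)$ to a single nonzero elementary $e_{s_0}\in H$ via iterated shear-and-degenerate --- is the crux and is not actually carried out. That alone means the proposal is not a proof. But there is a second, more concrete gap that you gloss over. Suppose the extraction succeeds and yields some $e_{s_0}\in H$ with $\deg s_0=D\ge 2$. Your final paragraph then tries to conclude $S=\k[x_2,\ldots,x_n]$ from: $S$ is a closed $\k$-linear subspace, stable under $\GL_{n-1}(\k)\ltimes\Tra_{n-1}(\k)$, and $S$ ``meets every degree''. The first two properties are fine, but the third is exactly what is not established. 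Indeed, in characteristic~$0$ a closed $\k$-linear $\Aff_{n-1}$-stable subspace of $\k[x_2,\ldots,x_n]$ is automatically graded (conjugate by scalars and use Vandermonde), and translation-stability forces the set of degrees present to be downward closed; so any such $S$ is $\k[x_2,\ldots,x_n]_{\le D'}$ for some $D'\le\infty$. The truncation $\k[x_2,\ldots,x_n]_{\le D}$ already satisfies every hypothesis you list, so those hypotheses cannot force $D'=\infty$.

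To climb to higher degrees you must go back to the closedness of $H$ and its full group structure --- e.g.\ compose $e_{s_0}$ with permutation-conjugates to produce non-affine elements of $H$ of larger degree and then extract again, controlling that the extracted degree actually increases. Your sentence ``applying this extraction to the non-affine elements of $H$ (of all degrees that occur)'' presupposes this, but nothing guarantees a priori that elements of every degree occur in $H$, nor that extraction from a degree-$d$ element yields an elementary of degree close to~$d$. This inductive bootstrapping, together with the extraction itself, is where the substance of Edo's argument lies; neither step is present here.
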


We now consider the fact where $N$ is closed and normal, but without the assumption that it contains an affine automorphism. This 
was done by Furter and Kraft in \cite[Proposition 15.11.5]{FurterKraft}, using the following classical ``Alexander trick'':
\begin{lemma}\label{AlexanderTrick}
Let $\k$ be a field, let $n\ge 1$ and let $f\in \Aut_\k(\A^n)$ that fixes the origin $(0,\ldots,0)\in \A^n$. Writing $\alpha=(tx_1,\ldots,tx_n)\in \Aut_{\k[t,\frac{1}{t}]}$, the element
\[g=\alpha^{-1} \circ f\circ \alpha\] 
is an element of $\Aut_{\k[t]}(\A^n)$ that is such that replacing $t$ with $0$ in $g$ gives $g(0)=(Df)_{(0,\ldots,0)}\in \GL_n(\k)$, which is the linear part of $f$, obtained by taking the homogenous part of degree $1$ of each coordinate of $f$.
\end{lemma}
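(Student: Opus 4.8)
## Proof Plan for Lemma \ref{AlexanderTrick}

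The plan is to verify the three assertions directly from the definitions: that $g = \alpha^{-1}\circ f \circ \alpha$ has coordinates in $\k[t, x_1,\ldots,x_n]$ (no negative powers of $t$), that $\Jac(g)\in \k[t]^*$ so that $g \in \Aut_{\k[t]}(\A^n)$ by Lemma \ref{lem:ABJac}, and finally that substituting $t = 0$ recovers the linear part of $f$. First I would write $f = (f_1,\ldots,f_n)$ with each $f_i \in \k[x_1,\ldots,x_n]$, and since $f$ fixes the origin, $f_i$ has zero constant term, so we may write $f_i = \sum_{m\ge 1} f_{i,m}$ where $f_{i,m}$ is the homogeneous part of degree $m$. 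A direct computation gives
\[
g_i = t^{-1} f_i(tx_1,\ldots,tx_n) = t^{-1}\sum_{m\ge 1} t^m f_{i,m}(x_1,\ldots,x_n) = \sum_{m\ge 1} t^{m-1} f_{i,m}(x_1,\ldots,x_n),
\]
which visibly lies in $\k[t,x_1,\ldots,x_n]$ because every exponent $m-1$ is $\ge 0$ (this is where the hypothesis that $f$ fixes the origin is used — a nonzero constant term would produce a $t^{-1}$). So $g \in \End_{\k[t]}(\A^n)$, and since $g = \alpha^{-1}\circ f\circ \alpha$ with $\alpha, f \in \Aut_{\k[t,\frac{1}{t}]}(\A^n)$, we also have $g \in \Aut_{\k[t,\frac{1}{t}]}(\A^n)$.

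Next I would pin down the Jacobian. Since $\Jac$ is a group homomorphism on $\Aut_{\k[t,\frac1t]}(\A^n)$, we get $\Jac(g) = \Jac(\alpha)^{-1}\Jac(f)\Jac(\alpha) = \Jac(f)$, and as $f \in \Aut_\k(\A^n)$ its Jacobian is an invertible element of $\k[x_1,\ldots,x_n]$, hence a nonzero constant in $\k^* \subseteq \k[t]^*$. Applying Lemma \ref{lem:ABJac} with $A = \k[t]$ and $B = \k[t,\frac{1}{t}]$ to the element $g \in \End_{\k[t]}(\A^n)\cap \Aut_{\k[t,\frac1t]}(\A^n)$ with $\Jac(g)\in \k[t]^*$, we conclude $g \in \Aut_{\k[t]}(\A^n)$ as claimed.

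Finally, substituting $t = 0$ into the formula $g_i = \sum_{m\ge 1} t^{m-1} f_{i,m}$ kills every term with $m \ge 2$ and leaves exactly $g_i(0) = f_{i,1}$, the homogeneous degree-one part of $f_i$. Thus $g(0) = (f_{1,1},\ldots,f_{n,1})$, which is precisely the linear part of $f$; and this linear part equals $(Df)_{(0,\ldots,0)}$ because $\frac{\partial f_i}{\partial x_j}$ evaluated at the origin picks out the coefficient of $x_j$ in $f_{i,1}$. Since $f$ is an automorphism fixing the origin, this matrix is invertible, so $g(0) \in \GL_n(\k)$. There is no serious obstacle here: the whole lemma is a bookkeeping exercise with the substitution $x_i \mapsto t x_i$, and the only point requiring a little care is invoking Lemma \ref{lem:ABJac} correctly to upgrade membership in $\Aut_{\k[t,\frac1t]}(\A^n)$ (together with integrality of the coordinates) to membership in $\Aut_{\k[t]}(\A^n)$.
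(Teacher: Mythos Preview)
Your proof is correct and follows essentially the same approach as the paper: decompose each $f_i$ into homogeneous components, compute $g_i=\sum_{m\ge 1}t^{m-1}f_{i,m}$ directly, observe $\Jac(g)=\Jac(f)\in\k^*$, and invoke Lemma~\ref{lem:ABJac} with $A=\k[t]\subseteq B=\k[t,\tfrac{1}{t}]$. Your write-up is in fact slightly more explicit than the paper's about why $\Jac(g)=\Jac(f)$ and why $(Df)_{(0,\ldots,0)}$ coincides with the linear part, but the argument is the same.
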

\begin{proof}We write $f=(f_1,\ldots,f_n)$ with $f_i\in A[x_1,\ldots,x_n]$ and $f_i=f_{i,1}+f_{i,2}+\cdots+f_{i,d}$ where each $f_{i,j}\in A[x_1,\ldots,x_n]$ is homogeneous of degree $j$, and $d$ is the maximal degree of all polynomials $f_1,\ldots,f_n$. We then obtain 
\[g=(g_1,\ldots,g_n)\] 
where $g_i=g_{i,1}+tf_{i,2}+t^2f_{i,3}+\cdots+t^{d-1}f_{i,d}$ for each $i$.  Hence, $g \in \End_{\k[t]}(\A^n)\cap \Aut_{\k[t,\frac{1}{t}]}(\A^n)$. As $\Jac(g)=\Jac(f)\in\k^*$, we obtain $g\in \Aut_{\k[t]}$ by Lemma~\ref{lem:ABJac}.

Replacing $t$ with $0$ in $g$ gives $(f_{1,1},\ldots,f_{n,1})=(Df)_{(0,\ldots,0)}\in \GL_n(\k)$, which is the linear part of $f$.
\end{proof}
The following result was already given in \cite[Proposition 15.11.5]{FurterKraft} in the case where $\mathrm{char}(\k)=0$.
\begin{lemma}\label{NclosedSLn}
Let $\k$ be an infinite field, let $n\ge 2$ and let $N$ be a closed normal subgroup of $\Aut_\k(\A^n)$. If $N\not=\{\mathrm{id}\}$, then $N$ contains $\SL_n(\k)$.
\end{lemma}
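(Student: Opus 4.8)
The strategy is to show that a non-trivial closed normal subgroup $N\trianglelefteq\Aut_\k(\A^n)$ must contain a non-trivial element fixing the origin, apply the Alexander trick (Lemma~\ref{AlexanderTrick}) to produce, via the closedness of $N$, a non-trivial element of $N\cap\GL_n(\k)$, and then use normality together with the (absolute) simplicity properties of $\mathrm{PSL}_n$ and the conjugation action of $\GL_n(\k)$ to conclude that all of $\SL_n(\k)\subseteq N$. The role of closedness is exactly the same as in Proposition~\ref{TheProp}/Proposition~\ref{PropLimitTrans}: it lets us pass from the values $g(t_0)$, $t_0\in\k^*$, of a one-parameter family lying in $N$ to the limit value $g(0)$.

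\textbf{Step 1: reduce to an element fixing the origin.} Pick $f\in N\setminus\{\mathrm{id}\}$. After composing $f$ with a suitable translation (translations are in $\Aut_\k(\A^n)$, and conjugating $f$ by a translation keeps us in $N$) we may try to arrange that $f$ has a fixed point; but this is not automatic in positive characteristic, so instead I would argue more directly. Consider the family $t_0\mapsto \rho_{t_0}\circ f\circ \rho_{t_0}^{-1}$, where $\rho_{t_0}$ runs through translations; this is a morphism $\A^n\to\Aut_\k(\A^n)$ all of whose values lie in $N$. If $f$ already fixes the origin, go to Step~2. Otherwise, use a commutator construction as in the proof of Proposition~\ref{PropLimitTrans}: conjugating $f$ by translations and taking commutators produces a non-trivial element of $N$, and one arranges (again using the one-parameter/closedness mechanism of Proposition~\ref{TheProp}, applied over $\k$ itself since $\k$ is infinite) that the limit fixes the origin. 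Concretely, if $f=(f_1,\dots,f_n)$ and $\tau=(x_1+tv_1,\dots,x_n+tv_n)$ with $v\in\k^n$ generic, then $g=\tau^{-1}f^{-1}\tau f\in\Aut_{\k[t]}(\A^n)$ satisfies $g(0)=\mathrm{id}$, and a further conjugation by a suitable $\alpha=(t^cx_1,\dots,t^cx_n)$ together with Lemma~\ref{lem:ABJac} yields an element of $\Aut_{\k[t]}(\A^n)$ whose specialisation at $t=0$ fixes the origin and is non-trivial. The one subtlety to check is non-triviality of the limit: this is the kind of ``leading-term'' argument already carried out in the proof of Proposition~\ref{TheProp}, and I expect it to be the main technical obstacle.

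\textbf{Step 2: apply the Alexander trick.} Given $0\ne f\in N$ fixing the origin, Lemma~\ref{AlexanderTrick} gives $g=\alpha^{-1}\circ f\circ\alpha\in\Aut_{\k[t]}(\A^n)$ with $\alpha=(tx_1,\dots,tx_n)$, such that $g(0)=(Df)_{(0,\dots,0)}\in\GL_n(\k)$ is the linear part of $f$. For each $t_0\in\k^*$ the element $g(t_0)=\alpha(t_0)^{-1}f\,\alpha(t_0)$ is conjugate to $f$ in $\Aut_\k(\A^n)$, hence lies in $N$ by normality. Since $\k$ is infinite and the map $t_0\mapsto g(t_0)$ is a morphism (continuous for the ind-topology, and likewise for the weaker one), the preimage of the closed set $N$ contains $\A^1\setminus\{0\}$ and is closed, hence contains $0$; therefore $g(0)\in N$. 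Thus $N$ contains the linear part $L$ of $f$. If $L\ne\mathrm{id}$ we have a non-trivial element of $N\cap\GL_n(\k)$; if $L=\mathrm{id}$ (i.e.\ $f$ is tangent to the identity), then $f$ itself is unipotent and non-trivial, and one can conjugate $f$ by homotheties $\alpha(u)=(ux_1,\dots,ux_n)$ with $u\in\k^*$ and take commutators/limits to reduce the degree until a non-trivial linear element is produced — alternatively, observe that a non-trivial tangent-to-identity $f$ has a non-trivial lowest-degree homogeneous part of degree $\ge2$, and the Alexander-type rescaling isolates it; composing such rescalings with $f^{-1}$ and iterating lowers the degree, eventually landing in $N\cap(\GL_n(\k)\setminus\{\mathrm{id}\})$. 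So in all cases $N\cap\GL_n(\k)\ne\{\mathrm{id}\}$.

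\textbf{Step 3: from one non-trivial linear element to all of $\SL_n(\k)$.} Now $M:=N\cap\GL_n(\k)$ is a non-trivial normal subgroup of $\GL_n(\k)$ — indeed, it is normalised by $\GL_n(\k)\subseteq\Aut_\k(\A^n)$. Conjugating within $\Aut_\k(\A^n)$ by the homotheties $(ux_1,\dots,ux_n)$ does not leave $\GL_n(\k)$, so $M$ is actually normal in $\Aut_\k(\A^n)\cap(\text{linear maps})=\GL_n(\k)$. Since $\k$ is infinite, $\SL_n(\k)$ is perfect and $\mathrm{PSL}_n(\k)$ is simple (for $n\ge2$, the only exception being $n=2$ with $|\k|\le3$, which is excluded here as $\k$ is infinite); a non-trivial normal subgroup of $\GL_n(\k)$ either is central — but a non-trivial central element $\lambda\,\mathrm{id}$ with $\lambda\ne1$, when one forms the commutator with a non-central linear $A$, yields $[\lambda\,\mathrm{id},A]=\mathrm{id}$, so this case forces $M$ non-central unless $M$ reduces to scalars, and a purely scalar $M$ is ruled out by taking a commutator with a generic elementary matrix — or contains $\SL_n(\k)$. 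More cleanly: take $0\ne m\in M$ with $m\ne\mathrm{id}$; if $m\notin\k^*\cdot\mathrm{id}$ then $[m,e]\ne\mathrm{id}$ for some elementary $e\in\SL_n(\k)$, and $[m,e]\in M\cap\SL_n(\k)$, a non-trivial normal subgroup of the simple-modulo-centre group $\SL_n(\k)$, hence $M\supseteq\SL_n(\k)$; if $m=\lambda\,\mathrm{id}$ with $\lambda\ne1$, conjugate $f$ differently in Step~2 (or multiply $f$ by $m^{-1}$ and repeat) to avoid the purely scalar case. Either way $\SL_n(\k)\subseteq N$, which is the claim. \qed
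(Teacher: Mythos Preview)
Your overall architecture (produce an origin-fixing element of $N$, Alexander-trick it to a linear element, then use simplicity of $\mathrm{PSL}_n$) is the right shape, but each step contains a genuine gap, and the paper's proof avoids all three simultaneously with one clever commutator.

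\textbf{Step 1.} You do not actually produce an element of $N$ fixing the origin. Invoking Proposition~\ref{TheProp} goes in the wrong direction: that machinery yields \emph{translations} at $t=0$, and non-trivial translations have no fixed point at all. Your ``further conjugation by $\alpha=(t^cx_1,\dots,t^cx_n)$'' does not help either: for $t_0\ne0$ the conjugate $\alpha(t_0)^{-1}g(t_0)\alpha(t_0)$ does not fix the origin (since $g(t_0)(0)\ne0$ in general), so there is no reason the limit should.

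\textbf{Step 2.} Even granting an $f\in N$ fixing the origin, if $f$ is tangent to the identity then the Alexander limit is $\mathrm{id}$ and you learn nothing. Your proposed ``iterate to lower the degree'' is not an argument; you would need to show that some specific commutator/rescaling produces a non-trivial linear limit, and you have not.

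\textbf{Step 3.} If the linear element you obtain is a scalar $\lambda\,\mathrm{id}$, your proposal ``conjugate $f$ differently in Step~2 or multiply by $m^{-1}$ and repeat'' is circular: you have not exhibited any mechanism that forces a non-scalar outcome.

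The paper's proof sidesteps all three issues with a single stroke. Given $h\in N\setminus\{\mathrm{id}\}$, choose $p\ne q$ with $h(p)=q$; after conjugation, $p=0$ and $q=(1,0,\dots,0)$. Take
\[
\beta=(x_1,\;x_2+x_1(x_1-1)^2,\;x_3,\dots,x_n)\in\SAut_\k(\A^n),
\]
which fixes both $p$ and $q$. Then $f=h^{-1}\beta^{-1}h\beta\in N$ fixes $p$ automatically. Moreover $(D\beta)_q=\mathrm{id}$ while $(D\beta)_p$ is a non-trivial elementary matrix, so the chain rule gives $(Df)_p=(Dh)_p^{-1}\cdot\mathrm{id}\cdot(Dh)_p\cdot(D\beta)_p=(D\beta)_p$. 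Thus the Alexander limit of $f$ is a \emph{specific non-trivial elementary matrix} in $\SL_n(\k)$: non-identity, non-scalar, already in $\SL_n$. One conjugation then gives all elementary matrices, hence $\SL_n(\k)\subseteq N$. The point you are missing is this choice of $\beta$ with $D\beta$ trivial at one endpoint and non-trivial at the other; it resolves Steps~1--3 in one line.
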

\begin{proof}
We take $h\in N\setminus \{\mathrm{id}\}$ and choose two different points $p,q\in \A^n(\k)$ such that $h(p)=q$ (always possible as $\k$ is infinite). Replacing $h$ with a conjugate by a translation, we may assume that $p=(0,\ldots,0)$. Replacing with a conjugate by an element of $\GL_n(\k)$, we may assume that $q=(1,0,\ldots,0)$.

We choose  $\beta=(x_1,x_2+x_1(x_1-1)^2,x_3,\ldots,x_n)\in \SAut_\k(\A^n)$ that fixes both $p$ and $q$, and consider 
\[ f= h^{-1} \circ\beta^{-1}\circ  h\circ \beta \in N.\]
As $\beta$ fixes $p$ and $q$ and $h$ sends $p$ onto $q$, we find that $f(p)=p$, i.e.~$f$ fixes the origin. Moreover, the linear part of $f$ is the Jacobian matrix $Df=(\frac{\partial f_i}{\partial x_j})_{i,j=1}^n\in \GL_n(\k[x_1,\ldots,x_n])$ of $f$ evaluated at the origin $p$, that is equal to
\[ (Df)_p=(Dh^{-1})_q\cdot (D\beta^{-1})_q \cdot (Dh)_p \cdot (D\beta)_p\in \GL_n(\k).\]
As $h$ sends $p$ onto $q$ and $h^{-1}\circ h=\mathrm{id}$, we find that $(Dh^{-1})_q=( (Dh)_p)^{-1}$. Moreover, $(D\beta^{-1})_q=((D\beta)_q)^{-1}$ is the identity and $(D\beta)_p$ is a non-trivial elementary matrix, since $\frac{\partial x_1(x_1-1)^2}{\partial x_1}$ vanishes at $q$ but does not vanish at $p$. Hence, $(Df)_p=(D\beta)_p$ is a non-trivial elementary matrix in $\SL_n(\k)$.

 Writing $\alpha=(tx_1,\ldots,tx_n)\in \Aut_{\k[t,\frac{1}{t}]}$, the element
\[g=\alpha^{-1} \circ f\circ \alpha\] 
is an element of $\Aut_{\k[t]}(\A^n)$ that is such that replacing $t$ with $0$ in $g$ gives $g(0)=(Df)_p\in \SL_n(\k)$ (Lemma~\ref{AlexanderTrick}). 

Replacing $t$ with $t_0\in \k^*$ in $g$, we obtain a conjugate of $f$, that belongs then to $N$. As $N$ is closed, this implies that $g(0)=(Df)_p\in N$. As this latter is non-trivial elementary matrix, all such matrices are contained in $N$ and then $\SL_n(\k)\subseteq N.$
\end{proof}
We may now prove Proposition~\ref{NAut}:
\begin{proof}[Proof of Proposition~\ref{NAut}]
As $N$ is a closed normal subgroup of $\Aut_\k(\A^n)$, $N$ contains $\SL_n(\k)$ (Lemma~\ref{NclosedSLn}). The group $N\cap \SAut_\k(\A^n)$ is a  normal subgroup of $\SAut_\k(\A^n)$ that contains $\SL_n(\k)$, it then contains all translations (Proposition~\ref{SLnTame}).

We now show that $\GL_n(\k)$ is contained in $N$. For this, take $M\in \GL_n(\k)$, and choose $R\in N$ with $\Jac(R)=\det(M)$, which is possible since $\Jac(N)=\k^*$. Replacing $R$ with its composition with a translation, we may assume that $N$ fixes the origin.  We then do as in Lemma~\ref{AlexanderTrick}, write $\alpha=(tx_1,\ldots,tx_n)\in \Aut_{\k[t,\frac{1}{t}]}$ and obtain that
$g=\alpha^{-1}\circ  R \circ\alpha$
is an element of $\Aut_{\k[t]}(\A^n)$ that is such that $g(0)=(DR)_{(0,\ldots,0)}\in \GL_n(\k)$. Hence, $g(0)\in N$ and $\Jac(g(0))=\Jac(R)=\det(M)$, which implies that $M\cdot (g(0))^{-1}\in \SL_n(\k)$. Since $N$ contains $\SL_n(\k)$ and contains $g(0)$, it contains $M$.

We now show that $N=\Aut_\k(\A^n)$. For this we take $f\in \Aut_\k(\A^n)$ and prove that $f\in N$. As before, replacing $f$ with its composition with a translation, we may assume that $f$ fixes the origin. Choosing the same $\alpha$ as before, we obtain that
$g'=\alpha^{-1}\circ  f \circ\alpha$
is an element of $\Aut_{\k[t]}(\A^n)$ such that $g'(0)\in \GL_n(\k)$ (again by Lemma~\ref{AlexanderTrick}).

We then consider
\[ h=g'\circ f^{-1}\in \Aut_{\k[t]}(\A^n).\]
Replacing $t$ with $t_0\in \A^1\setminus \{0\}$ in $h$, we obtain $h(t_0)=g'(t_0)\circ f^{-1}= \alpha(t_0)^{-1}\circ  f \circ\alpha(t_0)\circ f^{-1}$, that belongs to $N$, as $\alpha(t_0)\in \GL_n(\k)$. Replacing $t$ with $0$ then also gives an elemen of $N$, that is $g'(0)\circ f^{-1}$. As $g'(0)\in \GL_n(\k)\subseteq N$, we find that $f\in N$, as desired.
\end{proof}
\bibliographystyle{alpha}
\bibliography{biblio}

\end{document}